\documentclass[12pt]{amsart}
\usepackage[margin=1.1in]{geometry}
\usepackage{amsmath,amssymb,amsthm,mathtools}
\usepackage[colorlinks=true,linkcolor=blue,citecolor=blue]{hyperref}

\newtheorem{theorem}{Theorem}[section]
\newtheorem{proposition}[theorem]{Proposition}
\newtheorem{lemma}[theorem]{Lemma}

\theoremstyle{definition}
\newtheorem{remark}[theorem]{Remark}

\newcommand{\R}{\mathbb{R}}

\newcommand{\Lou}{\mathcal{L}}
\newcommand{\Tg}{T_\gamma}
\newcommand{\Per}{\operatorname{Per}}

\title[Failure of a BM-type inequality for the Gaussian torsional rigidity]{Failure of a Brunn--Minkowski-type inequality\\
for the Gaussian torsional rigidity}
\author{Xuan Hien Nguyen}
\address[Xuan Hien Nguyen]{Iowa State University}
\email{\href{mailto: xhnguyen@iastate.edu}{\nolinkurl{xhnguyen@iastate.edu}}}

\author{Alina Stancu}
\address[Alina Stancu]{Concordia University}
\email{\href{mailto: alina.stancu@concordia.ca}{\nolinkurl{alina.stancu@concordia.ca}}}

\date{\today}

\begin{document}
\maketitle
\begin{abstract}
Let $u$ be the torsion function for the Ornstein--Uhlenbeck operator on a bounded domain $\Omega \subset \R^n$, i.e.,
the solution of $\Delta u - x \cdot \nabla u=-1$ in $\Omega$ with $u=0$ on $\partial\Omega$. Let $\Tg(\Omega)=\int_\Omega u\,d\gamma$ be the Gaussian torsional
rigidity.  We prove that the Brunn--Minkowski-type inequality
$\Tg((1-t)\Omega_0+t\Omega_1)^{\alpha}\le
(1-t)\Tg(\Omega_0)^{\alpha}+t\Tg(\Omega_1)^{\alpha}$ fails for every
exponent $\alpha>0$ and in every dimension $n\ge2$, for a pair of convex
bodies centrally symmetric with respect to the origin, which may be taken
smooth with positive curvature.
This answers Conjecture~1.4  for any $n \geq 2$, and Question~(Q), of Mar\'in Sola and Salerno
in the negative.  The mechanism is a first-order lower bound for $\Tg$ at
a ball $\Omega_0$ under Minkowski perturbations. When the perturbing body $\Omega_1$ has small
torsion and large mean width, $\Tg((1-t) \Omega_0 + t \Omega_1)$ increases to first order. Since the Minkowski combination has
larger torsion than both endpoints, no exponent can repair the
inequality. For $n=1$, convexity with the optimal exponent $\frac13$
holds on symmetric intervals by results of the same authors, \cite{MSS26}. We
prove that the logarithm of the torsion is neither convex nor concave
along Minkowski combinations of symmetric intervals, that no non-zero
exponent yields concavity, and that convexity fails for every positive
exponent when one set is a union of two intervals or when the sets are
reflected off-center intervals.
\end{abstract}

\medskip
\noindent\textbf{2020 Mathematics Subject Classification.} 52A40, 35J25.

\smallskip
\noindent\textbf{Keywords.} Ornstein--Uhlenbeck operator, Gaussian
torsional rigidity, Brunn--Minkowski inequality, convex bodies, mean
width.

\section{Introduction}\label{sec:intro}

Let $\gamma$ be the standard Gaussian measure on $\R^n$,
$d\gamma=(2\pi)^{-n/2}e^{-|x|^2/2}\,dx$, and let
\[
\Lou u=\Delta u-x\cdot\nabla u
\]
be the Ornstein--Uhlenbeck operator, so that
$\int\nabla u\cdot\nabla v\,d\gamma=\int(-\Lou u)\,v\,d\gamma$ for smooth
$u$ and compactly supported $v$.  For a bounded open set
$\Omega\subset\R^n$, the torsion function of $\Omega$ is the solution
$u=u_\Omega$ of
\begin{equation}\label{eq:torsion}
\Lou u=-1\ \text{ in }\Omega,\qquad u=0\ \text{ on }\partial\Omega,
\end{equation}
and the Gaussian torsional rigidity is $\Tg(\Omega)=\int_\Omega u\,d\gamma$.
For a convex body $K$, that is, a compact convex set with nonempty
interior, we write $\Tg(K):=\Tg(\operatorname{int}K)$. Throughout,
centrally symmetric means centrally symmetric with respect to the
origin, that is, $\Omega=-\Omega$.

Mar\'in Sola and Salerno \cite{MSS26} studied Brunn--Minkowski-type
inequalities for $\Tg$. They proved that
$t\mapsto\Tg((1-t)B_{R_0}+tB_{R_1})^{1/3}$ is convex for balls in $\R^n$, and conjectured (Conjecture~1.4 of \cite{MSS26}) that
\begin{equation}\label{eq:BMconj}
\Tg\bigl((1-t)\Omega_0+t\Omega_1\bigr)^{1/3}\le
(1-t)\Tg(\Omega_0)^{1/3}+t\Tg(\Omega_1)^{1/3}
\end{equation}
for centrally symmetric bounded convex $\Omega_0,\Omega_1\subset\R^n$ and
$t\in[0,1]$, where
\[
(1-t)\Omega_0+t\Omega_1
=\bigl\{(1-t)x_0+tx_1:\ x_0\in\Omega_0,\ x_1\in\Omega_1\bigr\}
\]
is the Minkowski combination of $\Omega_0$ and $\Omega_1$. They also observed numerically that an analogous statement with exponent $1/(n+2)$ fails when $n \in \{3, 5, 10\}$ as the corresponding inequality changes sign on balls, stated that this phenomenon seems to occur for any $n \geq 3$, and asked if this exponent is, however, sharp for $n \in \{1, 2\}$ (Question~(Q) in \cite{MSS26}).  Related work on
log-concavity for the Ornstein--Uhlenbeck operator includes
\cite{CFLS24,CQSell,CQS26,Qin25}.  In the classical setting of the Laplace operator, inequalities of
this type rest on concavity properties of the torsion function, such as
convexity of the superlevel sets \cite{ML71} or the concavity of $\sqrt u$ \cite{Ken85}.  For
$\Lou$ no such property is available. The companion work \cite{NgConc} addresses the concavity of the torsion function with diverging conclusions even for smooth, uniformly convex domains.

We prove the following.

\begin{theorem}\label{thm:main-BM}
Let $n\ge2$. There exist $L>0$, $\varepsilon>0$, and $t_1>0$ such that
$\Omega_0=B_1\subset\R^n$ and
$\Omega_1=([-L,L]\times\{0\}^{n-1})+B_\varepsilon$ satisfy
\[
\Tg\bigl((1-t)\Omega_0+t\Omega_1\bigr)>
\max\bigl(\Tg(\Omega_0),\Tg(\Omega_1)\bigr)
\qquad\text{for all }t\in(0,t_1).
\]
Consequently, for every $\alpha>0$, the function
$t\mapsto\Tg((1-t)\Omega_0+t\Omega_1)^{\alpha}$ is not convex on $[0,1]$.
In particular, both the conjectured inequality \eqref{eq:BMconj}
 and  the inequality of Question~(Q) of \cite{MSS26}
with exponent $1/(n+2)$ fail for any dimension $n \geq 2$. For $n=2$ and $n=3$, one may take $L=2$
and $\varepsilon=\frac1{10}$. The same holds with $\Omega_1$ replaced by
a centrally symmetric convex body with $C^\infty$ boundary and positive
curvature.
\end{theorem}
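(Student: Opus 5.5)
The proof rests on a first-variation (Hadamard) formula for $\Tg$ under Minkowski perturbations of the ball, combined with a crude upper bound for $\Tg(\Omega_1)$ when $\Omega_1$ is a thin neighbourhood of a segment.

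\textbf{Step 1: the first variation at the ball.} Let $\Omega_t=(1-t)B_1+t\Omega_1$. Its support function is $h_t=(1-t)+t\,h_{\Omega_1}$. The torsion functional is the maximum of the energy $\int(2v-|\nabla v|^2)\,d\gamma$, so the Hadamard formula for the Gaussian torsion gives
\[
\frac{d}{dt}\Big|_{t=0^+}\Tg(\Omega_t)=\int_{\partial B_1}|\nabla u_{B_1}|^2\,(h_{\Omega_1}-1)\,\frac{e^{-1/2}}{(2\pi)^{n/2}}\,d\mathcal H^{n-1}.
\]
The normal speed is $h_{\Omega_1}(\nu)-h_{B_1}(\nu)=h_{\Omega_1}(\nu)-1$. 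By rotational symmetry $u_{B_1}$ is radial, so $|\nabla u_{B_1}|=c_n>0$ is constant on $\partial B_1$. It can be computed explicitly by solving the radial ODE $u''+((n-1)/r-r)u'=-1$, which gives $u'(1)=-e^{1/2}\int_0^1 r^{n-1}e^{-r^2/2}dr$. The derivative is therefore a positive multiple of $\int_{S^{n-1}}(h_{\Omega_1}-1)\,d\sigma$, i.e.\ of $\tfrac12 w(\Omega_1)-1$ in terms of mean width. For $\Omega_1=[-L,L]e_1+B_\varepsilon$ we have $h_{\Omega_1}(\nu)=L|\nu_1|+\varepsilon$. So the derivative is positive as soon as $L\,\mathbb E_\sigma|\nu_1|+\varepsilon>1$, which holds for $L$ large; the concrete choice $L=2$ works for $n=2,3$ since $\mathbb E|\nu_1|=2/\pi$ and $1/2$ respectively. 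I would justify the formula rigorously either through the standard shape-derivative theory, or more cheaply via a one-sided lower bound. Only a lower bound on $\Tg(\Omega_t)$ is needed, and this follows from the variational characterization by inserting a test function: extend $u_{B_1}$ radially and cut it off at the boundary of $\Omega_t$. This yields $\Tg(\Omega_t)\ge\Tg(B_1)+t\,D+o(t)$ with $D>0$. Since $\Omega_t$ is a smooth (when $\Omega_1$ is smooth) convex body containing $B_1$ up to $O(t)$ perturbation, regularity up to the boundary makes the $o(t)$ uniform.

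\textbf{Step 2: $\Tg(\Omega_1)<\Tg(B_1)$.} By the variational characterization, $\Tg$ is monotone under inclusion. Moreover, $\Omega_1\subset S_\varepsilon:=\R\times B^{n-1}_\varepsilon$, and the Gaussian measure factorizes. So the torsion on the slab-cylinder reduces to the $(n-1)$-dimensional torsion on $B^{n-1}_\varepsilon$ times $\gamma_1(\R)=1$, giving $\Tg(\Omega_1)\le\Tg^{(n-1)}(B_\varepsilon^{n-1})$. A comparison with $\Lou\ge$ Euclidean bounds gives $\Tg^{(n-1)}(B_\varepsilon)\lesssim\varepsilon^2\gamma(B_\varepsilon)$. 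Alternatively, one can use the Gaussian Poincaré/Faber--Krahn bound $\Tg\le\gamma(\Omega)/\lambda_1$ with $\lambda_1\gtrsim\varepsilon^{-2}$. Either way the bound tends to $0$, while $\Tg(B_1)>0$ is fixed. For $L=2,\varepsilon=1/10$ I would check the explicit numbers, using the radial formula for $\Tg(B_1)$ and the bound $\Tg(\Omega_1)\le\varepsilon^2/(n-1)\cdot\gamma$-mass, obtained from the supersolution $\varepsilon^2-|x'|^2$ scaled appropriately, since $\Lou(-|x'|^2)=-2(n-1)+2|x'|^2\le -2(n-1)+2\varepsilon^2$.

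\textbf{Step 3: conclusion.} Steps 1 and 2 give $\Tg(\Omega_t)>\Tg(B_1)=\max(\Tg(\Omega_0),\Tg(\Omega_1))$ for small $t>0$. A convex function $f(t)=\Tg(\Omega_t)^\alpha$ on $[0,1]$ satisfies $f(t)\le\max(f(0),f(1))$, which is a contradiction, so \eqref{eq:BMconj} fails for every $\alpha>0$, in particular for $1/3$ and $1/(n+2)$. For the smooth version, I would replace $\Omega_1$ by a smooth positively curved symmetric body $C^0$-close to it, for instance an approximation by $\{x:\,(x_1/L)^{2k}+\dots\}$ or Schneider's regularization. Since mean width is continuous and $\Tg$ is monotone, both strict inequalities persist. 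The main obstacle is making the first-order lower bound in Step 1 rigorous with uniform error. I would handle it with the test-function argument, stretching $u_{B_1}$ along the normal by the factor given by $h_t$, so that the $o(t)$ terms are controlled by $C^2$ bounds on $u_{B_1}$.
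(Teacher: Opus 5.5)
Your overall route is the paper's: a one-sided first-order lower bound at $B_1$, obtained by inserting a rescaled copy of $u_{B_1}$ into the variational characterization (Lemma~\ref{lem:variational}), with first-order coefficient $\int_{\partial B_1}|\nabla u|^2(h_{\Omega_1}-1)\,d\gamma_{\partial B_1}$. That coefficient is positive exactly when the mean width exceeds $2$, and it is combined with a barrier showing the needle has tiny torsion. Your Steps~2--3 and the smoothing argument are fine. The cylinder barrier $(\varepsilon^2-|x'|^2)/(2(n-1)-2\varepsilon^2)$, or your slicing bound $\Tg(\Omega_1)\le \Tg^{(n-1)}(B^{n-1}_\varepsilon)$, is a slightly sharper variant of the paper's slab barrier $(\varepsilon^2-x_2^2)/(2(1-\varepsilon^2))$. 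The numbers at $L=2$, $\varepsilon=\frac1{10}$ do check out.

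The gap is in Step~1, exactly where you locate the main obstacle, and the tools you name do not close it.

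\textbf{The test function built from $h_t$ is not admissible.} Stretching $u_{B_1}$ ``by the factor given by $h_t$'', i.e.\ $v(r\xi)=u_{B_1}(r/h_t(\xi))$, produces a function supported in the star body $\{r<h_t(\xi)\}$. That set strictly contains $\Omega_t$: the radial function satisfies $\rho_t\le h_t$, with strict inequality wherever the outer normal at $\rho_t(\xi)\xi$ is not $\xi$, which for the capsule $\Omega_t=B_{1-t+t\varepsilon}+[-tL,tL]e_1$ is almost every direction. So $v\notin H^1_0(\Omega_t)$ and it yields no lower bound.

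\textbf{Shape-derivative theory runs into the same inclusion.} The field $x\mapsto x+t(h_1(x/|x|)-1)x$ maps $B_1$ onto $\{r<h_t\}\supset\Omega_t$, so monotonicity goes the wrong way for a lower bound. Transporting along normals via $\nu\mapsto\nabla h_t(\nu)$ is also unavailable, since $h_N=L|\xi_1|+\varepsilon$ is not differentiable at $\xi_1=0$.

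\textbf{What is actually needed.} The admissible choice is $v_t(r\xi)=u_{B_1}(r/\rho_t(\xi))$. You then have to show two things: that its first-order term is still governed by $h_t-1$, and that the angular energy, which is quadratic in $|\nabla_S\rho_t|$, is $O(t^2)$. Both follow from the geometric estimates
\[
h_t-Ct^2\le\rho_t\le h_t,\qquad |\nabla_S\rho_t|\le Ct .
\]
This is the content of Lemma~\ref{lem:radial}. Its proof writes each boundary point of the Minkowski sum as $(1-t)\nu+t\zeta$ with a common outer normal $\nu$, so the angle between $\xi$ and $\nu$ is $O(t)$, and then uses that $h_1$ is $R_1$-Lipschitz. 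These errors are controlled by the geometry of $\Omega_t$, not by $C^2$ bounds on $u_{B_1}$ or boundary regularity of $\Omega_t$; in the main case the needle is not smooth anyway.

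\textbf{The coefficient still has to be computed.} With these estimates, the energy of $v_t$ reduces to a spherical average of $G(\rho_t(\xi))$ minus an $O(t^2)$ angular term, for an explicit one-variable function $G$. You must then verify that $G'(1)$ produces the Hadamard coefficient $u'(1)^2e^{-1/2}$, rather than asserting it. The paper does this by an integration by parts using the radial ODE.
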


The mechanism is a first-order lower bound: at a ball, under Minkowski
perturbations, $\Tg$ grows at least at a rate proportional to the excess
of the perturbing body's perimeter over the ball's
(Proposition~\ref{prop:variation}), so a body of small torsion and large
perimeter increases $\Tg$ to first order.

The construction is not specific to the plane. In $\R^n$, the corresponding lower bound
is proportional to the mean width of the perturbing body, which is again
linear under Minkowski addition, and a needle
$([-L,L]\times\{0\}^{n-1})+B_\varepsilon$ of large mean width and
small torsion violates every inequality of the form \eqref{eq:BMconj}.
Theorem~\ref{thm:main-n} proves Theorem~\ref{thm:main-BM} for $n\ge3$,
with explicit $L$ and $\varepsilon$. In particular, the inequality of
Question~(Q) is negative in every dimension $n \geq 2$.

For $n=1$, the picture is different. A bounded open connected centrally
symmetric subset of $\R$ is a symmetric interval $B_R=(-R,R)$, and for
these sets Mar\'in Sola and Salerno prove that
$t\mapsto\Tg((1-t)B_{R_0}+tB_{R_1})^{1/3}$ is convex and that the
exponent $\frac13$ is optimal \cite{MSS26}. We prove the following.

\begin{theorem}\label{thm:main-1}
Let $n=1$, and for $R_0,R_1>0$ let $R_t=(1-t)R_0+tR_1$.
\begin{enumerate}
\item[(i)] The function $t\mapsto\log\Tg(B_{R_t})$ is neither convex
for all pairs $R_0,R_1$ nor concave for all pairs, and for every
$\alpha\in\R\setminus\{0\}$ there exist $R_0\neq R_1$ for which
$t\mapsto\Tg(B_{R_t})^{\alpha}$ is not concave.
\item[(ii)] For every $\alpha>0$, the function
$t\mapsto\Tg\bigl((1-t)\Omega_0+t\Omega_1\bigr)^{\alpha}$ is not convex
on $[0,1]$, and neither is
$t\mapsto\log\Tg\bigl((1-t)\Omega_0+t\Omega_1\bigr)$, both when
$\Omega_0=(-\delta,\delta)$ and
$\Omega_1=(-2-\delta,-2+\delta)\cup(2-\delta,2+\delta)$ with
$0<\delta\le\frac1{10}$, and when $\Omega_0=(2-\delta,2+\delta)$ and
$\Omega_1=-\Omega_0$.
\end{enumerate}
\end{theorem}

Theorem~\ref{thm:main-1} is proved in Section~\ref{sec:oneD}: part (i)
is Theorem~\ref{thm:oneD} (iii)--(iv), and it makes the numerical
observations of Example~3.2 in \cite{MSS26} rigorous, part (ii) is
Propositions~\ref{prop:disconnected} and~\ref{prop:noncentered}. Thus,
the convexity of Question~(Q) for $n=1$ holds on symmetric intervals
with the optimal exponent $\frac13$, and fails for every positive
exponent in the stated generality of open, bounded, centrally symmetric
sets. Together with Theorem~\ref{thm:main-BM}, this settles
Question~(Q).

This note is structured as follows. Section~\ref{sec:prelim} collects notation and standard facts,
Section~\ref{sec:BM} proves Theorem~\ref{thm:main-BM} for dimension 2,
Section~\ref{sec:higherdim} extends it to $\R^n$, and
Section~\ref{sec:oneD} settles the one-dimensional case.

\section{Preliminaries}\label{sec:prelim}

\subsection{Regularity and comparison}\label{sec:regularity}

Let $\Omega\subset\R^n$ be a bounded convex domain. Every boundary point of
$\Omega$ admits an exterior half-space, hence an exterior ball, so
\eqref{eq:torsion} has a unique solution
$u\in C^{2,\alpha}_{\mathrm{loc}}(\Omega)\cap C(\overline\Omega)$
\cite[Thm.~6.13]{GT}, and $u\in C^\infty(\Omega)$ since the coefficients
of $\Lou$ are smooth \cite[Thm.~6.17]{GT}. The
weak maximum principle for $\Lou$ holds since $\Lou$ has no zeroth-order
term \cite[Thm.~3.3]{GT}, and the strong maximum principle gives $u>0$ in
$\Omega$.

The operator is symmetric with respect to $\gamma$:
$e^{-|x|^2/2}\Lou u=\operatorname{div}\bigl(e^{-|x|^2/2}\nabla u\bigr)$.
Hence a function $\varphi\in W^{1,2}_{\mathrm{loc}}$ solves $\Lou\varphi=0$
weakly if and only if it solves the divergence-form equation
$\operatorname{div}(e^{-|x|^2/2}\nabla\varphi)=0$ weakly.

\subsection{Variational characterization}\label{sec:variational}

Let $H^1_0(\Omega,\gamma)$ be the closure of $C_c^\infty(\Omega)$ in the
norm $\|v\|^2=\int_{\Omega}(v^2+|\nabla v|^2)\,d\gamma$. For bounded $\Omega$, this
is $H^1_0(\Omega)$ with an equivalent norm.

\begin{lemma}\label{lem:variational}
Let $\Omega\subset\R^n$ be a bounded open set with torsion function $u$.
Then
\[
\Tg(\Omega)=\max\Bigl\{\int_\Omega\bigl(2v-|\nabla v|^2\bigr)\,d\gamma:\
v\in H^1_0(\Omega,\gamma)\Bigr\},
\]
and the maximum is attained only at $v=u$.
\end{lemma}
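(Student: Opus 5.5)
The plan is to prove the standard Dirichlet principle for the weighted Dirichlet form. Write $J(v)=\int_\Omega(2v-|\nabla v|^2)\,d\gamma$ on $H^1_0(\Omega,\gamma)$.

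\emph{Step 1: $u$ lies in $H^1_0(\Omega,\gamma)$ and is the weak solution.} For a general bounded open set the classical solution of \eqref{eq:torsion} need not be continuous up to the boundary. I would therefore define $u$ as the weak solution: the unique $u\in H^1_0(\Omega,\gamma)$ with
\[
\int_\Omega\nabla u\cdot\nabla\varphi\,d\gamma=\int_\Omega\varphi\,d\gamma\quad\text{for all }\varphi\in H^1_0(\Omega,\gamma).
\]
Existence and uniqueness follow from Lax--Milgram, or Riesz representation. Coercivity of $\int|\nabla v|^2d\gamma$ on $H^1_0(\Omega,\gamma)$ comes from the Poincar\'e inequality on bounded $\Omega$, because $d\gamma$ is comparable to Lebesgue measure there with density bounded above and below. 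The functional $\varphi\mapsto\int\varphi\,d\gamma$ is bounded since $\gamma(\Omega)<\infty$.

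For convex $\Omega$, as in Section~\ref{sec:regularity}, this weak solution coincides with the classical one. Interior regularity makes the weak solution smooth in $\Omega$. Barriers at exterior half-spaces give continuity up to the boundary with zero boundary values. The classical solution is unique by the maximum principle. The divergence identity $e^{-|x|^2/2}\Lou u=\operatorname{div}(e^{-|x|^2/2}\nabla u)$ converts the classical equation into the weak one.

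This identification is the only point requiring care. I expect it to be the main obstacle, mainly as a matter of what "torsion function" means for arbitrary open sets. I would handle it by simply taking the weak formulation as the definition in the general case.

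\emph{Step 2: the value at $u$.} Testing the weak equation with $\varphi=u$ gives $\int|\nabla u|^2d\gamma=\int u\,d\gamma=\Tg(\Omega)$. Hence $J(u)=2\Tg(\Omega)-\Tg(\Omega)=\Tg(\Omega)$.

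\emph{Step 3: maximality and uniqueness.} For $v\in H^1_0(\Omega,\gamma)$, set $w=v-u\in H^1_0(\Omega,\gamma)$. Expanding gives
\[
J(v)=J(u)+2\Bigl(\int w\,d\gamma-\int\nabla u\cdot\nabla w\,d\gamma\Bigr)-\int|\nabla w|^2d\gamma .
\]
The middle term vanishes by the weak equation with $\varphi=w$. Therefore $J(v)=\Tg(\Omega)-\int|\nabla w|^2\,d\gamma\le\Tg(\Omega)$.

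Equality forces $\nabla w=0$ a.e. Since $w\in H^1_0$ of a bounded set, the Poincar\'e inequality then gives $w=0$, that is, $v=u$.
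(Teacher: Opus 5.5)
Your proposal is correct and is the paper's proof. You write $v=u+w$ and use the weak form of \eqref{eq:torsion} with test functions $w$ and $u$ to get $\int_\Omega(2v-|\nabla v|^2)\,d\gamma=\Tg(\Omega)-\int_\Omega|\nabla w|^2\,d\gamma$, then get uniqueness from the Poincar\'e (Friedrichs) inequality; your Step~1 only makes explicit what the paper uses tacitly, namely that $u\in H^1_0(\Omega,\gamma)$ satisfies the weak equation.
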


\begin{proof}
Write $v=u+\varphi$ with $\varphi\in H^1_0(\Omega,\gamma)$. By the weak
form of \eqref{eq:torsion},
$\int_{\Omega}\nabla u\cdot\nabla\varphi\,d\gamma=\int\varphi\,d\gamma$, so
\[
\int_{\Omega}(2v-|\nabla v|^2)\,d\gamma
=\int_{\Omega}(2u-|\nabla u|^2)\,d\gamma-\int_{\Omega}|\nabla\varphi|^2\,d\gamma
=\Tg(\Omega)-\int_{\Omega}|\nabla\varphi|^2\,d\gamma \leq \Tg(\Omega),
\]
where the last equality uses $\int|\nabla u|^2d\gamma=\int u\,d\gamma$,
again by the weak form with test function $u$. Equality in the inequality holds only if
$\nabla\varphi=0$, hence $\varphi=0$ by the Friedrichs inequality on the
bounded set $\Omega$.
\end{proof}

\subsection{The ball}\label{sec:ball}

For $n\ge1$ let $u_n$ be the radial solution of
\begin{equation}\label{eq:radialODE}
u_n''+\frac{n-1}{r}u_n'-ru_n'=-1,\qquad u_n(0)=0,\quad u_n'(0)=0 .
\end{equation}
The torsion function of $B_R\subset\R^n$ is $u_n(|x|)-u_n(R)$. For $n=2$,
the first-order equation for $u_2'$ integrates explicitly:
\begin{equation}\label{eq:u2}
u_2'(r)=-\frac{e^{r^2/2}-1}{r},\qquad
u_2''(r)=-\frac{e^{r^2/2}(r^2-1)+1}{r^2}.
\end{equation}

\subsection{Support functions and Minkowski addition}\label{sec:convex}

For a convex body $K\subset\R^n$, let $h_K(\xi)=\max_{z\in K}z\cdot\xi$ be
its support function as a function on $\mathbb{S}^{n-1}$. Minkowski addition of convex bodies corresponds to addition of support functions,
$h_{K+L}=h_K+h_L$, and $h_{\lambda K}=\lambda h_K$ for $\lambda\ge0$
\cite[Thm.~1.7.5]{Sch14}. The
Hausdorff distance satisfies
$d_H(K,L)=\|h_K-h_L\|_{L^\infty(\mathbb{S}^{n-1})}$, so
$d_H(K+M,L+M)=d_H(K,L)$. In the plane, with $h(\theta)=h_K(\cos\theta,
\sin\theta)$, Cauchy's formula gives
$\Per(K)=\int_0^{2\pi}h(\theta)\,d\theta$, and $|h'|\le\max_{ z\in K}|z|$ almost
everywhere. A convex body $K$ is of class $C^\infty_+$ if $\partial K$ is a
$C^\infty$ hypersurface with positive Gauss curvature. Then $h_K$ is
$C^\infty$ on $\R^n\setminus\{0\}$ when extended by homogeneity $h_K(x)=|x|h_K(x/|x|)$,  and the principal radii of curvature of
$\partial K$ at the point with outer normal $\xi\in \mathbb{S}^{n-1}$ are the
eigenvalues of the Hessian of $h_K$ restricted to $\xi^\perp$
\cite[\S2.5]{Sch14}. Every convex body is a Hausdorff limit of bodies of
class $C^\infty_+$, and if $K$ is centrally symmetric the approximants can
be chosen centrally symmetric \cite[\S3.4]{Sch14}, \cite[p.~438]{Sch84}.

\section{The planar case}\label{sec:BM}

\subsection{The first variation at a ball}\label{sec:variation}

\begin{lemma}\label{lem:ballclosed}
For $B_R\subset\R^2$ with $R>0$ and $Z=R^2/2$,
\[
\Tg(B_R)=\int_0^{Z}\frac{\cosh z-1}{z}\,dz
=\sum_{m\ge1}\frac{Z^{2m}}{2m\,(2m)!} .
\]
\end{lemma}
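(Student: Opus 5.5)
We compute $\Tg(B_R)$ directly from the radial torsion function in Section~\ref{sec:ball}. For $n=2$ the torsion function of $B_R$ is $u(x)=u_2(|x|)-u_2(R)$, and in polar coordinates $d\gamma=\frac{1}{2\pi}e^{-r^2/2}\,r\,dr\,d\theta$. Hence
\[
\Tg(B_R)=\int_0^R\bigl(u_2(r)-u_2(R)\bigr)e^{-r^2/2}r\,dr .
\]
Rather than finding $u_2$ itself, we integrate by parts against $-\frac{d}{dr}e^{-r^2/2}=re^{-r^2/2}$. Write $u_2(r)-u_2(R)=-\int_r^R u_2'(s)\,ds$. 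Fubini then gives
\[
\Tg(B_R)=-\int_0^R u_2'(s)\Bigl(\int_0^s re^{-r^2/2}\,dr\Bigr)ds=-\int_0^R u_2'(s)\bigl(1-e^{-s^2/2}\bigr)\,ds .
\]
Equivalently, the boundary term vanishes because $u=0$ at $r=R$ and the weight $r e^{-r^2/2}$ kills the contribution at $r=0$.

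Next we insert the explicit formula \eqref{eq:u2}, $-u_2'(s)=(e^{s^2/2}-1)/s$. The integrand becomes
\[
\frac{(e^{s^2/2}-1)(1-e^{-s^2/2})}{s}=\frac{e^{s^2/2}+e^{-s^2/2}-2}{s}=\frac{2(\cosh(s^2/2)-1)}{s}.
\]
We substitute $z=s^2/2$, so $dz=s\,ds$ and $ds/s=dz/(2z)$. This yields
\[
\Tg(B_R)=\int_0^{Z}\frac{\cosh z-1}{z}\,dz, \qquad Z=R^2/2,
\]
which is the first formula.

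For the series, we expand $\cosh z-1=\sum_{m\ge1}z^{2m}/(2m)!$. Dividing by $z$ and integrating term by term from $0$ to $Z$ gives $\sum_{m\ge1}Z^{2m}/(2m\,(2m)!)$. The interchange of sum and integral is justified because all terms are nonnegative (monotone convergence), or alternatively because the power series converges uniformly on $[0,Z]$.

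The only step needing care is the derivation of \eqref{eq:u2}, which the paper states without proof. One checks it from \eqref{eq:radialODE}: with $w=u_2'$, the equation $w'+(1/r-r)w=-1$ has integrating factor $re^{-r^2/2}$. This gives $(re^{-r^2/2}w)'=-re^{-r^2/2}$, so $re^{-r^2/2}w=e^{-r^2/2}-1$, using $w(0)=0$. That is exactly $w=-(e^{r^2/2}-1)/r$.

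We also confirm that the integrand $(\cosh z-1)/z$ is continuous at $z=0$, since it behaves like $z/2$ there. Hence every integral above converges and there are no endpoint issues. Nothing here is a real obstacle; the main point is organizing the Fubini/integration-by-parts step so that only $u_2'$, which is explicit, is needed.
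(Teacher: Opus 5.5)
Your proof is correct. It differs from the paper's only in how you reduce $\Tg(B_R)$ to a one-dimensional integral involving $u_2'$ alone.

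The paper invokes the energy identity $\int u\,d\gamma=\int|\nabla u|^2\,d\gamma$ from the proof of Lemma~\ref{lem:variational}. This gives $\Tg(B_R)=\int_0^R u_2'(r)^2e^{-r^2/2}r\,dr$, after which the paper substitutes $z=r^2/2$. You instead apply Fubini to $\int_0^R u\,re^{-r^2/2}\,dr$ and obtain $-\int_0^R u_2'(s)(1-e^{-s^2/2})\,ds$.

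The two integrands coincide pointwise. The first integral of the ODE that you derived, $se^{-s^2/2}u_2'(s)=e^{-s^2/2}-1$, says exactly that $1-e^{-s^2/2}=-se^{-s^2/2}u_2'(s)$. So your Fubini step is the radial form of the energy identity, done by hand.

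What each route buys:
\begin{itemize}
\item The paper's version is shorter, since Lemma~\ref{lem:variational} is already available.
\item Yours is self-contained and uses $u_2'$ only linearly.
\item Yours also supplies the derivation of \eqref{eq:u2}, which the paper states without proof.
\end{itemize}

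One small inaccuracy in your aside on integration by parts. If you integrate by parts with the antiderivative $-e^{-r^2/2}$, which your phrase ``against $-\frac{d}{dr}e^{-r^2/2}$'' suggests, the boundary term at $r=0$ equals $u(0)>0$ and does not vanish. It vanishes for the antiderivative $1-e^{-r^2/2}=\int_0^r se^{-s^2/2}\,ds$, because that function is zero at $0$, not because the weight itself kills anything. This is precisely the antiderivative your Fubini computation produces, so the argument itself is unaffected.
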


\begin{proof}
By Lemma~\ref{lem:variational} and its proof,
$\Tg(B_R)=\int_{B_R}|\nabla u|^2\,d\gamma$ with $u=u_2-u_2(R)$, so
$\Tg(B_R)=\int_0^Ru_2'(r)^2e^{-r^2/2}r\,dr$. Substituting $z=r^2/2$ and
using \eqref{eq:u2},
$u_2'(r)^2e^{-r^2/2}r\,dr=(e^z-1)^2e^{-z}\,dz/(2z)$, and
$(e^z-1)^2e^{-z}=2(\cosh z-1)$. Expanding
$(\cosh z-1)/z=\sum_{m\ge1}z^{2m-1}/(2m)!$ and integrating gives the
series.
\end{proof}

\begin{lemma}\label{lem:radial}
Let $\Omega_1\subset B_{R_1}\subset\R^2$ be a convex body containing the origin, let $R>0$, and
for $0<t\le t_0=\min\{\frac12,\frac{R}{8R_1}\}$ let
$\Omega_t=(1-t)B_R+t\Omega_1$, with support function $h_t=(1-t)R+th_1$ and
radial function $\rho_t$. Then for all $\theta$,
\begin{equation}\label{eq:radial-i}
h_t(\theta)-\frac{\pi R_1^2}{R}\,t^2\le\rho_t(\theta)\le h_t(\theta),
\end{equation}
and for almost every $\theta$,
\begin{equation}\label{eq:radial-ii}
|\rho_t'(\theta)|\le3R_1t .
\end{equation}
\end{lemma}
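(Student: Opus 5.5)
We work with the polar description of the convex body $\Omega_t$. Because $\Omega_1$ contains the origin, so does $\Omega_t$, and $\rho_t$ is well defined. We use three elementary facts about $h_1$ and $h_t$:
\[
|h_1|\le R_1,\qquad |h_1'|\le R_1 \ \text{a.e. (Section~\ref{sec:convex})},\qquad h_t'=t\,h_1'.
\]
From $t\le\frac12$ and $tR_1\le R/8$ we get the two-sided bound
\[
\tfrac38R\le(1-t)R-tR_1\le h_t\le(1-t)R+tR_1 .
\]
The upper bound in \eqref{eq:radial-i} is immediate. The point $\rho_t(\theta)e_\theta$ lies in $\Omega_t$, so $h_t(\theta)\ge \rho_t(\theta)\,e_\theta\cdot e_\theta=\rho_t(\theta)$.

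For the lower bound I would use the dual formula
\[
\rho_t(\theta)=\inf_{|\psi|<\pi/2}\frac{h_t(\theta+\psi)}{\cos\psi},
\]
which holds because $\Omega_t$ is the intersection of its supporting half-planes. Write $h=h_t(\theta)$ and $a=tR_1$. By the Lipschitz bound, $h_t(\theta+\psi)\ge h-a|\psi|$, so
\[
\frac{h_t(\theta+\psi)}{\cos\psi}-h\ \ge\ h(\sec\psi-1)-a|\psi|\sec\psi .
\]
I would minimize this expression by splitting the range of $\psi$.
\begin{enumerate}
\item[(a)] For $|\psi|$ bounded away from $\pi/2$, say $\cos\psi\ge\frac12$, use $\sec\psi-1\ge\psi^2/2$ and $\sec\psi\le 2$. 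The right-hand side is then at least $h\psi^2/2-2a|\psi|\ge -2a^2/h$.
\item[(b)] For $\cos\psi<\frac12$, the term $h(\sec\psi-1)\ge h$ dominates. Since $a\le R/8$ and $h\ge 3R/8$, the expression is positive there.
\end{enumerate}
Case (a) gives $\rho_t\ge h-2a^2/h\ge h-\frac{16}{3}\,t^2R_1^2/R$. This has the required form, but the constant $\frac{16}{3}$ exceeds $\pi$. Getting the sharper constant $\pi$ is purely bookkeeping, and it is the step I expect to need the most care. I would first try to tighten the estimate by substituting $s=\tan\psi$, using $|\psi|\le|s|$ and $\sec\psi-1=s^2/(\sqrt{1+s^2}+1)$, and then redoing the one-variable minimization with the explicit constants $h\ge\frac38R$ and $a\le\frac R8$. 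Failing that, I would note that any constant of the form $C R_1^2/R$ serves the later use, and adjust it.

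For \eqref{eq:radial-ii}, take $\theta$ such that $\rho_t$ is differentiable at $\theta$, the boundary point $x=\rho_t(\theta)e_\theta$ has a unique outer normal angle $\varphi$, and $h_t$ is differentiable at $\varphi$. This excludes only a null set. We have the representation
\[
x=h_t(\varphi)e_\varphi+h_t'(\varphi)e_\varphi^\perp .
\]
Taking components gives $\rho_t\cos(\theta-\varphi)=h_t(\varphi)$ and $\rho_t\sin(\theta-\varphi)=h_t'(\varphi)=t\,h_1'(\varphi)$. The polar-curve relation $\rho_t'/\rho_t=\tan(\theta-\varphi)$ then yields
\[
|\rho_t'|=\frac{|h_t'(\varphi)|}{\cos(\theta-\varphi)}\le\frac{tR_1}{\cos(\theta-\varphi)} .
\]
Next, $\cos(\theta-\varphi)=h_t(\varphi)/\rho_t(\theta)$, and we bound this ratio using the two-sided estimate on $h_t$ together with $\rho_t\le h_t$. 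Writing $y=(1-t)R\ge R/2$, we get
\[
\cos(\theta-\varphi)\ \ge\ \frac{y-R/8}{y+R/8}\ \ge\ \frac35 .
\]
Hence $|\rho_t'|\le\frac53 tR_1\le 3R_1t$, which is \eqref{eq:radial-ii}.
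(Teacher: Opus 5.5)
\textbf{Lower bound.} As written, your argument does not prove \eqref{eq:radial-i}. Your case split only yields $\rho_t\ge h_t-\frac{16}{3}\frac{R_1^2}{R}t^2$. Changing the constant proves a different lemma, even though any $CR_1^2/R$ would serve Proposition~\ref{prop:variation}.

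Your dual-formula route can be finished without splitting cases. Take $k=a/h\le\frac13$ and $|\psi|<\frac\pi2$. The bound $\cos\psi\le1-\frac{\psi^2}{2}+\frac{\psi^4}{24}$, together with $(1-k^2)(\frac12-\frac{\psi^2}{24})\ge\frac14$, gives
\[
1-k|\psi|-(1-k^2)\cos\psi\ \ge\ \bigl(k-\tfrac{|\psi|}{2}\bigr)^2\ \ge\ 0 .
\]
This says $h_t(\theta+\psi)/\cos\psi\ge h-a^2/h\ge h-\frac83\frac{R_1^2}{R}t^2$. It also makes your case (b), which you only assert, unnecessary.

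The paper takes a different route and localizes the minimization instead. The infimum in your formula is attained at $\psi=\varphi-\theta$, where $\varphi$ is an outer normal angle at $x=\rho_t(\theta)e_\theta$. Write $x=(1-t)Re_\varphi+t\zeta$ with $\zeta\in\partial\Omega_1$, and use $\rho_t\ge(1-t)R$. Then $\beta=|\theta-\varphi|$ satisfies $\sin\beta\le 2R_1t/R\le\frac14$. Hence $\rho_t\ge h_t(\varphi)\ge h_t(\theta)-tR_1\beta$, and $\beta\le\frac\pi2\sin\beta\le\pi R_1t/R$; this is where the $\pi$ comes from. Your route needs only a lower bound on $h_t$ and its Lipschitz constant, so it applies beyond Minkowski combinations. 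The paper's route is shorter and reuses the same angle bound for \eqref{eq:radial-ii}.

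\textbf{Derivative bound.} In your proof of \eqref{eq:radial-ii}, you claim that requiring $h_t$ to be differentiable at the normal angle $\varphi(\theta)$ discards only a null set of $\theta$. This is false. Suppose $\Omega_t$ has an edge with normal $e_\varphi$. Then $h_t$ has a corner at $\varphi$, and every $\theta$ whose ray meets the relative interior of that edge has $\varphi$ as its unique normal angle. These $\theta$ fill an interval of positive length. This is exactly the case in the application, where $(1-t)B_1+tN$ is a stadium with two flat sides of length $2tL$. On that interval your representation $x=h_t(\varphi)e_\varphi+h_t'(\varphi)e_\varphi^\perp$ is unavailable.

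The repair is as follows. Keep only the points where $\rho_t$ is differentiable; there the normal is automatically unique. At those points use $|x\cdot e_\varphi^\perp|\le tR_1$, which holds for every outer normal angle $\varphi$ at $x$. It holds because $x\cdot e_\varphi^\perp$ lies between the one-sided derivatives of $h_t$ at $\varphi$, each of absolute value at most $tR_1$. Alternatively, as in the paper, the Minkowski decomposition gives $x\cdot e_\varphi^\perp=t\,\zeta\cdot e_\varphi^\perp$. With this change, your bound $\cos(\theta-\varphi)\ge\frac35$ and the conclusion $|\rho_t'|\le\frac53R_1t\le3R_1t$ go through. That part is then essentially the paper's argument, with a better constant.
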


\begin{proof}
Write $e_\theta=(\cos\theta,\sin\theta)$. The upper bound in
\eqref{eq:radial-i} is $\rho_t(\theta)=\rho_t(\theta)e_\theta\cdot e_\theta
\le h_t(\theta)$ by the definition of the support function. Let $x=\rho_t(\theta)e_\theta\in\partial\Omega_t$ and
let $\nu$ be an outer unit normal to $\Omega_t$ at $x$, at angle
$\varphi^*$. Every boundary point of a Minkowski sum decomposes as a sum of
boundary points with a common normal: in any
decomposition $x=a+b$ with $a\in(1-t)B_R$ and $b\in t\Omega_1$, the two
terms of $a\cdot\nu+b\cdot\nu=x\cdot\nu=(1-t)h_{B_R}(\nu)+t\,h_1(\nu)$
are bounded by the corresponding support values, so both are equalities.
Hence $x=(1-t)R\nu+t\zeta$ with
$\zeta\in\partial\Omega_1$, so that $x\times\nu=t(\zeta\times\nu)$ and the
angle $\beta=|\theta-\varphi^*|$ between $e_\theta$ and $\nu$ satisfies
\begin{equation}\label{eq:beta}
\sin\beta=\frac{|x\times\nu|}{|x|}\le\frac{tR_1}{(1-t)R}
\le\frac{2R_1t}{R}\le\frac14,
\end{equation}
because $|x\times\nu| =t|\zeta\times\nu| \leq tR_1$ since $\Omega_1\subset B_{R_1}$, and in the denominator, we use the assumption $0\in\Omega_1$. Indeed, since $\zeta$ is a supporting point of $\Omega_1$ with outward normal $\nu$, $ \zeta\cdot\nu=h_{1}(\nu)\geq0$, and
$|x|
 \geq x\cdot\nu
 =(1-t)R+t\,\zeta\cdot\nu
 \geq(1-t)R.$
In particular, $\cos\beta=x\cdot\nu/|x|>0$, so $\beta<\frac\pi2$.

Since $x\cdot\nu=h_t(\varphi^*)$,
$\rho_t(\theta)=h_t(\varphi^*)/\cos\beta\ge h_t(\varphi^*)\ge
h_t(\theta)-tR_1\beta$, using $|h_t'|=t|h_1'|\le tR_1$ almost everywhere.
On $[0, \frac{\pi}{2}]$,
$\beta\le\frac\pi2\sin\beta\le\pi R_1t/R$ which gives
\eqref{eq:radial-i}. 

At points of differentiability of $\rho_t$, $
 x'(\theta)
 =\rho_t'(\theta)e_\theta
 +\rho_t(\theta)e_\theta^\perp$  is tangential, so the polar
identity $\rho_t'=-\rho_t\,(\nu\cdot e_\theta^\perp)/(\nu\cdot e_\theta)$
gives $|\rho_t'|=\rho_t\tan\beta$. Now
$\rho_t\le R+tR_1\le\frac98R$ and, by \eqref{eq:beta}, $\cos\beta\ge0.96$, so
$|\rho_t'|\le\frac98R\cdot\frac{2R_1t}{0.96R}\le3R_1t$.
\end{proof}

\begin{proposition}\label{prop:variation}
Let $\Omega_1\subset B_{R_1}\subset\R^2$ be a convex body containing the origin, $R>0$, and
\[
M=\frac1{2\pi}\int_0^{2\pi}\bigl(h_1(\theta)-R\bigr)\,d\theta
=\frac{\Per(\Omega_1)-\Per(B_R)}{2\pi},
\ \ 
c_R=R\,u_2'(R)^2e^{-R^2/2}.
\]
Then there exist $t_1,C>0$ depending only on $R,R_1$ such that
\[
\Tg\bigl((1-t)B_R+t\Omega_1\bigr)\ge\Tg(B_R)+c_RMt-Ct^2
\qquad\text{for }0<t\le t_1 .
\]
In particular, if $\Per(\Omega_1)>2\pi R$, then
$\Tg((1-t)B_R+t\Omega_1)>\Tg(B_R)$ for all sufficiently small $t>0$.
\end{proposition}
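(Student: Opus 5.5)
We use the variational characterization, Lemma~\ref{lem:variational}, with a test function built by transplanting the ball's torsion function radially onto $\Omega_t=(1-t)B_R+t\Omega_1$. Let $u(x)=u_2(|x|)-u_2(R)$ be the torsion function of $B_R$, extended by the same formula to a neighbourhood of $\overline{B_R}$ (so $u<0$ outside $B_R$). Define
\[
v_t(r e_\theta)=u\Bigl(\tfrac{R}{\rho_t(\theta)}\,r\,e_\theta\Bigr),
\]
so that $v_t=0$ on $\partial\Omega_t$, $v_t$ is Lipschitz, and $v_t\in H^1_0(\Omega_t,\gamma)$. Then $\Tg(\Omega_t)\ge J_t:=\int_{\Omega_t}(2v_t-|\nabla v_t|^2)\,d\gamma$. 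It therefore suffices to show $J_t\ge \Tg(B_R)+c_RMt-Ct^2$.

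\textbf{Computing $J_t$.} Write $\rho_t=R+t\psi$. By Lemma~\ref{lem:radial}, $\psi=h_1-R+O(t)$ uniformly, and $|\rho_t'|\le 3R_1t$. In polar coordinates, with $s=Rr/\rho_t$ and $\lambda=\rho_t/R$,
\[
|\nabla v_t|^2=\lambda^{-2}u_2'(s)^2\bigl(1+s^2(\rho_t'/\rho_t)^2\bigr).
\]
The angular-derivative term contributes only $O(t^2)$, thanks to \eqref{eq:radial-ii}. Changing variables $r=\lambda s$ gives
\[
J_t=\frac1{2\pi}\int_0^{2\pi}F(\lambda(\theta))\,d\theta+O(t^2),
\qquad F(\lambda)=\int_0^R\bigl(2\lambda^2 u(s)-u_2'(s)^2\bigr)e^{-\lambda^2s^2/2}s\,ds .
\]
This $F(\lambda)$ is exactly the energy of the dilate $u(\cdot/\lambda)$ on $B_{\lambda R}$.

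\textbf{First-order term.} The value $F(1)$ equals $\Tg(B_R)$ (after normalizing the angular average consistently with $d\gamma$). To find $F'(1)$, note that $u(\cdot/\lambda)$ is the dilate of the optimizer, so $F'(1)$ is the shape derivative of $\Tg$ under normal speed $R$. Computing directly, or via Hadamard's formula $\frac{d}{dt}\Tg=\int_{\partial}|\nabla u|^2 V\,d\gamma$, gives per unit angle $F'(1)\cdot\frac{d\lambda}{dt}=u_2'(R)^2e^{-R^2/2}R\,\psi/R\cdot R$. After normalization this yields the integrand $c_R(h_1(\theta)-R)$. We would verify $F'(1)=R^2u_2'(R)^2e^{-R^2/2}$ directly: differentiate under the integral, use the ODE \eqref{eq:radialODE}, and integrate by parts; the interior terms cancel by stationarity, leaving the boundary term. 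Averaging over $\theta$ and using $\psi=h_1-R+O(t)$ gives $c_RMt$.

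\textbf{Second-order remainder.} $F$ is smooth in $\lambda$ near $1$. Since $|\lambda-1|\le CtR_1/R$, Taylor's theorem bounds the remainder by $Ct^2$ with $C$ depending on $R,R_1$. The $O(t)$ error between $\rho_t$ and $h_t$ from \eqref{eq:radial-i}, multiplied by $t$, also enters only at order $t^2$.

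The main obstacle is justifying that the first-order coefficient is exactly $c_R M$ with no spurious first-order loss. One reason for concern is that $v_t$ is not the torsion function, and the Gaussian weight is not dilation-invariant. Stationarity of $u$ in Lemma~\ref{lem:variational} resolves this: any first-order error in the test function costs only $O(t^2)$ in energy, so the first-order change equals the Hadamard boundary term. I would verify this explicitly through the derivative of $F$ at $\lambda=1$. The final claim follows because $\Per(\Omega_1)>2\pi R$ gives $M>0$, and for small $t$ the linear term $c_RMt$ dominates $Ct^2$.
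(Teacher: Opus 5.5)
Your proposal is correct and follows the paper's proof essentially step for step. It uses the same radially transplanted trial function $v_t(re_\theta)=u(Rr/\rho_t(\theta))$ in Lemma~\ref{lem:variational} and the same one-variable energy: your $F(\lambda)$ is the paper's $g(\lambda R)$, so your $F'(1)=R^2u_2'(R)^2e^{-R^2/2}$ is the paper's $g'(R)=c_R$, which the paper obtains by exactly the ODE-plus-integration-by-parts computation you sketch. It then combines a Taylor expansion with Lemma~\ref{lem:radial}, as you do.

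One harmless slip: the angular factor in $|\nabla v_t|^2$ is $1+(\rho_t'/\rho_t)^2$, not $1+s^2(\rho_t'/\rho_t)^2$, because the factor $r^{-2}=\lambda^{-2}s^{-2}$ cancels the $s^2$ in $|\partial_\theta v_t|^2=u_2'(s)^2s^2(\rho_t'/\rho_t)^2$. That term is $O(t^2)$ either way by \eqref{eq:radial-ii}.
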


\begin{proof}
It suffices to prove the claim for some $t_1 \leq t_0$, with $t_0$ as in
Lemma~\ref{lem:radial}. Consider $u=u_2-u_2(R)$ and $\rho=\rho_t$ the radial function of
$\Omega_t=(1-t)B_R+t\Omega_1$. Define
\[
v_t(re_\theta)=u\Bigl(\frac{Rr}{\rho(\theta)}\Bigr),\qquad
0\le r<\rho(\theta).
\]
Since
$r^{-1}\partial_\theta v_t=-u'\,R\rho'/\rho^2$,
$|\nabla v_t|^2=|\partial_r v_t|^2
+\frac{1}{r^2}|\partial_\theta v_t|^2=\frac{R^2}{\rho^2}\bigl(1+\frac{\rho'^2}{\rho^2}\bigr)
u'\bigl(\frac{Rr}{\rho}\bigr)^2$ almost everywhere. Since
$\rho_t\ge(1-t)R\ge R/2$ and $\rho_t\in W^{1,\infty}(\mathbb{S}^1)$, this formula
shows that $v_t\in W^{1,\infty}(\Omega_t)$. Its trace on
$\partial\Omega_t$ is zero and $\Omega_t$ is a Lipschitz domain, so
$v_t\in H^1_0(\Omega_t,\gamma)$. By
Lemma~\ref{lem:variational}, substituting $r=\rho s/R$,
\begin{align}
 \Tg(\Omega_t)
 &\geq \int_{\Omega_t}\bigl(2v_t-|\nabla v_t|^2\bigr)\,d\gamma \notag\\
 &=\frac1{2\pi}\int_0^{2\pi}\int_0^{\rho(\theta)}
 \left[
 2u\left(\frac{Rr}{\rho}\right)
 -\frac{R^2}{\rho^2}
  \left(1+\frac{\rho'^2}{\rho^2}\right)
  u'\left(\frac{Rr}{\rho}\right)^2
 \right]
 e^{-r^2/2}r\,dr\,d\theta \notag\\
 &=\frac1{2\pi}\int_0^{2\pi}
 \left[
 g\bigl(\rho(\theta)\bigr)
 -\frac{\rho'(\theta)^2}{\rho(\theta)^2}A_t(\theta)
 \right]d\theta,
 \label{eq:Jvt}
\end{align}
where
\begin{align*}
g(\rho)&=\int_0^R\Bigl[\frac{2\rho^2}{R^2}u(s)-u'(s)^2\Bigr]
e^{-\rho^2s^2/(2R^2)}s\,ds,
\\
0\le A_t(\theta)&=\int_0^R
 u'(s)^2
 e^{-\rho^2s^2/(2R^2)}
 s\,ds\le A=\int_0^Ru'(s)^2s\,ds .
\end{align*}
Note that $g(R)=\int_0^R(2u-u'^2)e^{-s^2/2}s\,ds=\Tg(B_R)$.

We claim that $g'(R)=c_R$. Differentiating under the integral,
\[
g'(R)=\frac1R\int_0^R\bigl[4u-s^2(2u-u'^2)\bigr]e^{-s^2/2}s\,ds .
\]
From \eqref{eq:radialODE} with $n=2$, $u''=-1-u'/s+su'$, hence
\[
\frac{d}{ds}\bigl[s^2u'(s)^2e^{-s^2/2}\bigr]
=\bigl[s^3u'^2-2s^2u'\bigr]e^{-s^2/2},
\ \ \text{so}\ \
R^2u'(R)^2e^{-R^2/2}=\int_0^R\bigl[s^3u'^2-2s^2u'\bigr]e^{-s^2/2}ds .
\]
Integrating by parts with $u(R)=0$,
$\int_0^R2s^2u'e^{-s^2/2}ds=-\int_0^R(4-2s^2)u\,s\,e^{-s^2/2}ds$.
Combining the two identities,
$\int_0^R[4u-2s^2u+s^2u'^2]e^{-s^2/2}s\,ds=R^2u'(R)^2e^{-R^2/2}$, that
is, $g'(R)=Ru'(R)^2e^{-R^2/2}=c_R$, where the last equality is
\eqref{eq:u2}.

Since $g$ is smooth near $R$, there exists $C_2$ with
$g(\rho)\ge g(R)+c_R(\rho-R)-C_2(\rho-R)^2$ for $|\rho-R|\le R/2$. We now estimate each of the terms in the expansion. First, we already noted that $g(R) = \Tg(B_R)$.  Secondly, by \eqref{eq:radial-i} and $h_t(\theta)=(1-t)R+t\,h_1(\theta)$, we have
\[
c_R(\rho_t-R) \ge c_R(h_t-R)-\frac{c_R\pi R_1^2}{R}t^2 =c_Rt (h_1 -R)- \frac{c_R\pi R_1^2}{R}t^2.
\]
Finally, 
$(1-t)B_R
\subset \Omega_t
\subset B_{(1-t)R+tR_1}$ gives that $C_2(\rho_t-R)^2
\leq
C_2(R+R_1)^2t^2.$
 Therefore,
\[
g(\rho_t)
\geq
g(R)+c_Rt(h_1-R)
-\left(
\frac{c_R\pi R_1^2}{R}
+C_2(R+R_1)^2
\right)t^2.
\]
By \eqref{eq:radial-ii} and $\rho_t\ge R/2$, the second term under the integral sign in
\eqref{eq:Jvt} is at most $(6R_1t/R)^2A$. 
Thus, the integrand in \eqref{eq:Jvt} can be bounded below by $g(R)+c_Rt(h_1-R)
-C t^2$ with 
$C=c_R\pi R_1^2/R+C_2(R+R_1)^2+36R_1^2A/R^2$ provided $t_1$ is small enough so
that $|\rho_t-R|\le R/2$ for $0<t\le t_1$. Inserting this last estimate into \eqref{eq:Jvt} completes the proof.
\end{proof}

\begin{remark}\label{rem:hadamard}
The identity $g'(R)=c_R$ is the formal Hadamard formula for the domain
variation of $\Tg$: since $v_t$ agrees with the torsion function at $t=0$
and $\int_{B_R}\bigl(2v-|\nabla v|^2\bigr)\,d\gamma$ is stationary there,
the expected first-order term is
$\int_{\partial B_R}u_\nu^2\,(\delta h)\,d\gamma_{\partial B_R}$, with
$\delta h=h_1-R$ and $u_\nu=u_2'(R)$ constant on $\partial B_R$.
Proposition~\ref{prop:variation} proves the corresponding one-sided
expansion, a lower bound whose first-order term depends on the perturbing
body only through its perimeter. This is all the counterexample requires. We do not prove that $c_RM$ is the exact first variation, which would
follow from a Hadamard shape-derivative theorem for sufficiently regular
variations.
\end{remark}

Remark~\ref{rem:hadamard} explains the failure of \eqref{eq:BMconj}. The
perimeter is linear under Minkowski addition,
$\Per((1-t)\Omega_0+t\Omega_1)=(1-t)\Per(\Omega_0)+t\Per(\Omega_1)$, so
along the segment from a ball to a body of larger perimeter the perimeter
increases linearly and the torsion increases at least at the positive
rate $c_R/(2\pi)$ per unit of added perimeter. A body of large perimeter and small torsion,
such as the needle in Theorem~\ref{thm:main-BM}, 
therefore increases 
the torsion $\Tg$  of the Minkowski sum at first order while contributing essentially nothing through its own torsion to the conjectured right-hand side
\eqref{eq:BMconj}. Along the family of balls, perimeter and torsion
increase together, and \eqref{eq:BMconj} holds \cite{MSS26}.

\subsection{Proof of Theorem \ref{thm:main-BM} for $n=2$}\label{sec:needle}

\begin{lemma}\label{lem:needle}
Let $\Omega\subset\{|x_2|<\varepsilon\}\subset\R^n$, $n\geq 2$, be open and bounded,
with $0<\varepsilon\le\frac12$. Then
$\Tg(\Omega)\le\dfrac{\varepsilon^2}{2(1-\varepsilon^2)}$.
\end{lemma}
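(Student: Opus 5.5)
Since $u_\Omega\ge 0$, I will bound $u_\Omega$ pointwise by a one-variable supersolution depending only on $x_2$, and then integrate against $\gamma$.

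\emph{Step 1: choose the barrier.} Take
\[
w(x)=\frac{\varepsilon^2-x_2^2}{2(1-\varepsilon^2)}.
\]
For this choice $\Delta w=-\frac{1}{1-\varepsilon^2}$ and $x\cdot\nabla w=-\frac{x_2^2}{1-\varepsilon^2}$, so
\[
\Lou w=\frac{-1+x_2^2}{1-\varepsilon^2}\le\frac{-1+\varepsilon^2}{1-\varepsilon^2}=-1
\]
on the strip $\{|x_2|<\varepsilon\}$. Moreover $w\ge0$ on the closed strip, and in particular $w\ge 0$ on $\partial\Omega\subset\{|x_2|\le\varepsilon\}$.

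\emph{Step 2: compare.} The difference $w-u_\Omega$ satisfies $\Lou(w-u_\Omega)\le 0$ in $\Omega$ and $w-u_\Omega\ge0$ on $\partial\Omega$. The weak maximum principle for $\Lou$ (Section~\ref{sec:regularity}) then gives $u_\Omega\le w$ in $\Omega$.

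For a general bounded open set, boundary values of $u_\Omega$ are only attained in the $H^1_0$ sense, so the comparison is best done weakly. Test the weak form against $(u_\Omega-w)^+\in H^1_0(\Omega)$; this function lies in $H^1_0$ because $w\ge0$ on $\overline\Omega$ and $u_\Omega\in H^1_0$. One obtains
\[
\int|\nabla(u-w)^+|^2\,d\gamma\le0,
\]
and hence $(u-w)^+=0$. I expect this to be the only delicate point of the argument, and it is mild.

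\emph{Step 3: integrate.} Integrating the bound from Step 2,
\[
\Tg(\Omega)=\int_\Omega u_\Omega\,d\gamma\le \int_\Omega w\,d\gamma\le \sup_\Omega w\cdot\gamma(\Omega)\le \frac{\varepsilon^2}{2(1-\varepsilon^2)}\cdot 1,
\]
using $\gamma(\Omega)\le 1$. This is exactly the claimed bound. The hypothesis $\varepsilon\le\frac12$ plays no role beyond guaranteeing $\varepsilon<1$, so that the denominator is positive.
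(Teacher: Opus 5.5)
Your proposal is correct and is essentially the paper's proof: the same barrier $\frac{\varepsilon^2-x_2^2}{2(1-\varepsilon^2)}$ with $-\Lou w=\frac{1-x_2^2}{1-\varepsilon^2}\ge1$ on the strip, comparison by the maximum principle, and the bound $\Tg(\Omega)\le\sup w\cdot\gamma(\Omega)\le\sup w$. Your weak comparison with the test function $(u_\Omega-w)^+\in H^1_0(\Omega)$ adds some care. The paper's regularity discussion in Section~\ref{sec:regularity} covers only convex domains, while the lemma is stated for arbitrary bounded open sets.
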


\begin{proof}
Let $q(x)=\dfrac{\varepsilon^2-x_2^2}{2(1-\varepsilon^2)}$. Then
$-\Lou q=\dfrac{1-x_2^2}{1-\varepsilon^2}\ge1$ on $\{|x_2|<\varepsilon\}$
and $q\ge0$ on $\overline\Omega$, so $u_\Omega\le q$ by the maximum
principle and $\Tg(\Omega)\le\sup q\cdot\gamma(\Omega)\le\sup q$.
\end{proof}

\begin{proof}[Proof of Theorem \ref{thm:main-BM}, case $n=2$]
Let $R=1$, $\Omega_0=B_1$, and $\Omega_1=N=[-L,L]\times\{0\}+
B_\varepsilon$ with $L=2$, $\varepsilon=\frac1{10}$. The support function
of $N$ is $h_N(\theta)=L|\cos\theta|+\varepsilon$, so
$\Per(N)=4L+2\pi\varepsilon=8+\frac\pi5>2\pi=\Per(B_1)$, because
$\pi<\frac{16}5$. By Proposition~\ref{prop:variation}, there exists $t_1>0$
with $\Tg(\Omega_t)>\Tg(B_1)$ for $0<t<t_1$, where
$\Omega_t=(1-t)\Omega_0+t\Omega_1$. Since $N\subset\{|x_2|<\frac1{10}\}$,
Lemma~\ref{lem:needle} gives $\Tg(N)\le\frac1{198}$, while
Lemma~\ref{lem:ballclosed} with $Z=\frac12$ gives
$\Tg(B_1)\ge Z^2/4=\frac1{16}$, the series having positive terms. Hence
$\Tg(\Omega_t)>\Tg(B_1)=\max(\Tg(\Omega_0),\Tg(\Omega_1))$ for
$0<t<t_1$, and for every $\alpha>0$,
\[
\Tg(\Omega_t)^\alpha>\max\bigl(\Tg(\Omega_0),\Tg(\Omega_1)\bigr)^\alpha
\ge(1-t)\Tg(\Omega_0)^\alpha+t\Tg(\Omega_1)^\alpha .
\]

For the smooth version, let $N_\delta$ be centrally symmetric convex bodies
of class $C^\infty_+$ with $d_H(N_\delta,N)\le\delta$
(Section~\ref{sec:convex}). Then $h_{N_\delta}\to h_N$ uniformly, so
$\Per(N_\delta)\to\Per(N)>2\pi$, and
$N_\delta\subset N+B_\delta\subset\{|x_2|<\varepsilon+\delta\}$, so
$\Tg(N_\delta)<\frac1{16}$ for small $\delta$, by Lemma~\ref{lem:needle}.
The argument above applies to the pair $(B_1,N_\delta)$.
\end{proof}

\section{Higher dimensions}\label{sec:higherdim}

In $\R^n$ the role of the perimeter is played by the mean width. Let
$\kappa_n$ be the volume of the unit ball, so that the surface measure
$\sigma$ of $\mathbb{S}^{n-1}$ has total mass $n\kappa_n$, and let
\[
w(K)=\frac{2}{n\kappa_n}\int_{\mathbb{S}^{n-1}}h_K\,d\sigma
\]
be the mean width of a convex body $K$, so $w(B_R)=2R$. Like the
perimeter in the plane, $w$ is linear under Minkowski addition, and
Cauchy's formula gives $w(K)=\Per(K)/\pi$ for $n=2$.

\begin{theorem}\label{thm:main-n}
Let $n\ge3$, $\Omega_0=B_1\subset\R^n$, and
$\Omega_1=([-L,L]\times\{0\}^{n-1})+B_\varepsilon$, where
\[
L=\frac{n\kappa_n}{2\kappa_{n-1}},\qquad
0<\varepsilon\le\frac12,\qquad
\varepsilon^2<\frac{3}{4n}\int_{B_1}\bigl(1-|x|^2\bigr)\,d\gamma .
\]
Then there exists $t_1>0$ such that
\[
\Tg\bigl((1-t)\Omega_0+t\Omega_1\bigr)>
\max\bigl(\Tg(\Omega_0),\Tg(\Omega_1)\bigr)
\qquad\text{for all }t\in(0,t_1).
\]
Consequently, for every $\alpha>0$, the function
$t\mapsto\Tg((1-t)\Omega_0+t\Omega_1)^{\alpha}$ is not convex on $[0,1]$.
In particular, the inequality of Question~(Q) of \cite{MSS26} with
exponent $1/(n+2)$ fails. The same conclusion holds with $\Omega_1$ replaced by a
centrally symmetric convex body of class $C^\infty_+$. For $n=3$, one may
take $L=2$ and $\varepsilon=\frac1{10}$.
\end{theorem}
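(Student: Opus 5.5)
The plan is to repeat the planar argument of Section~\ref{sec:BM}, with the mean width in place of the perimeter. It has three ingredients.

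\textbf{A first-variation lower bound at $B_R\subset\R^n$.} Let $\Omega_1\subset B_{R_1}$ contain the origin, and let $\rho_t$ be the radial function of $\Omega_t=(1-t)B_R+t\Omega_1$ on $\mathbb{S}^{n-1}$. I would prove
\[
\Tg(\Omega_t)\ge\Tg(B_R)+c_{R,n}\Bigl(\tfrac{w(\Omega_1)}2-R\Bigr)t-Ct^2,
\qquad c_{R,n}=R^{n-1}u_n'(R)^2e^{-R^2/2}\cdot\frac{n\kappa_n}{(2\pi)^{n/2}}>0 .
\]

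First, extend Lemma~\ref{lem:radial}. The two-dimensional argument is really about the plane spanned by $x$ and its normal $\nu$: we have $x=(1-t)R\nu+t\zeta$, which gives $\sin\beta\le 2R_1t/R$. This yields $h_t(\xi)-O(t^2)\le\rho_t(\xi)\le h_t(\xi)$. It also yields $|\nabla_{\mathbb{S}}\rho_t|=\rho_t\tan\beta=O(t)$, using that $h_1$ is $R_1$-Lipschitz on the sphere.

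Second, use the test function $v_t(r\xi)=u(Rr/\rho_t(\xi))$ with $u=u_n-u_n(R)$ in Lemma~\ref{lem:variational}. It satisfies
\[
|\nabla v_t|^2=\frac{R^2}{\rho^2}\Bigl(1+\frac{|\nabla_{\mathbb{S}}\rho|^2}{\rho^2}\Bigr)u'^2 .
\]
The substitution $r=\rho s/R$ gives $\Tg(\Omega_t)\ge\int_{\mathbb{S}^{n-1}}\bigl[g(\rho)-O(|\nabla_{\mathbb{S}}\rho|^2)\bigr]d\sigma$, where
\[
g(\rho)=(2\pi)^{-n/2}\int_0^R\Bigl[\tfrac{2\rho^n}{R^n}u-\tfrac{\rho^{n-2}}{R^{n-2}}u'^2\Bigr]e^{-\rho^2s^2/(2R^2)}s^{n-1}\,ds .
\]
Third, show $g'(R)$ equals the boundary flux term. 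This should come from the same computation as before, now with weight $s^{n-1}$. One differentiates $s^nu'^2e^{-s^2/2}$ using \eqref{eq:radialODE}, and integrates by parts using $u(R)=0$. Combining these, and averaging $h_1-R$ over $\mathbb{S}^{n-1}$, produces the mean-width term.

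\textbf{The needle has mean width exceeding $2$.} We have $h_{\Omega_1}(\xi)=L|\xi_1|+\varepsilon$. Slicing the sphere gives the average $\frac1{n\kappa_n}\int_{\mathbb{S}^{n-1}}|\xi_1|\,d\sigma=\frac{2\kappa_{n-1}}{n\kappa_n}$. Hence $w(\Omega_1)=2L\cdot\frac{2\kappa_{n-1}}{n\kappa_n}+2\varepsilon=2+2\varepsilon>2=w(B_1)$, and so $\Tg(\Omega_t)>\Tg(B_1)$ for small $t$.

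\textbf{Torsion comparison.} Lemma~\ref{lem:needle} together with $\varepsilon\le\frac12$ gives $\Tg(\Omega_1)\le\frac{\varepsilon^2}{2(1-\varepsilon^2)}\le\frac23\varepsilon^2$. For the ball, take $v=(1-|x|^2)/(2n)$. Since $\Lou|x|^2=2n-2|x|^2$, we get $-\Lou v=1-|x|^2/n\le1$ with $v=0$ on $\partial B_1$. The maximum principle then gives $u\ge v$, so
\[
\Tg(B_1)\ge\frac1{2n}\int_{B_1}(1-|x|^2)\,d\gamma>\frac23\varepsilon^2
\]
by the hypothesis on $\varepsilon$. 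This gives the strict inequality against the maximum of the two torsions. The failure of convexity of $\Tg^\alpha$ and of the inequality of Question~(Q) then follow exactly as in the case $n=2$.

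For $n=3$, $L=3\cdot\frac{4\pi}{3}/(2\pi)=2$. The condition with $\varepsilon=\frac1{10}$ reduces to the elementary numerical bound $\int_{B_1}(1-|x|^2)\,d\gamma>0.04$ in $\R^3$, which is a one-dimensional radial integral that I would bound below crudely. The $C^\infty_+$ version goes as in the planar case: approximate $\Omega_1$ in Hausdorff distance by symmetric $C^\infty_+$ bodies. Their mean width converges and they stay in a slab $\{|x_2|<\varepsilon+\delta\}$.

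The main obstacle is the $n$-dimensional version of Lemma~\ref{lem:radial}, specifically the gradient bound for $\rho_t$ on the sphere. My first attempt would reduce it pointwise to the two-plane containing $\xi$ and the normal $\nu$, where the planar trigonometry applies verbatim.
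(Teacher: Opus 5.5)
Your proposal is correct and is essentially the paper's proof. It uses the same $n$-dimensional extension of Lemma~\ref{lem:radial}, the same trial function $v_t(r\xi)=u(Rr/\rho_t(\xi))$ with the identity $G'(R)=R^{n-1}u'(R)^2e^{-R^2/2}$, the same computation $M=\varepsilon$ for the needle, and the same barriers (Lemma~\ref{lem:needle} and $(1-|x|^2)/(2n)$); the only differences are cosmetic: slicing instead of the divergence theorem for $\int_{\mathbb{S}^{n-1}}|\xi_1|\,d\sigma$, and a reduction to the plane of $\xi$ and $\nu$ instead of the explicit normal formula for $|\nabla_S\rho_t|$.
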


The proof follows Section~\ref{sec:BM} for which we list the corresponding modifications. Let
$u_n$ solve \eqref{eq:radialODE}, so that the torsion function of
$B_R\subset\R^n$ is $u=u_n(|x|)-u_n(R)$, and
$u_n'(r)=-r^{1-n}e^{r^2/2}\int_0^r s^{n-1}e^{-s^2/2}\,ds<0$, for $r>0$.

\begin{lemma}\label{lem:radial-n}
Lemma~\ref{lem:radial} holds in $\R^n$: with $\Omega_1\subset
B_{R_1}\subset\R^n$ a convex body containing the origin,
$0<t\le t_0=\min\{\frac12,\frac{R}{8R_1}\}$, and $\rho_t$ the radial
function of $\Omega_t=(1-t)B_R+t\Omega_1$ on $\mathbb{S}^{n-1}$, the estimate
\eqref{eq:radial-i} holds for all $\xi\in \mathbb{S}^{n-1}$ and
$|\nabla_S\rho_t|\le3R_1t$ almost everywhere, where $\nabla_S$ is the
gradient on $\mathbb{S}^{n-1}$.
\end{lemma}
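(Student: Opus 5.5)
The plan is to repeat the proof of Lemma~\ref{lem:radial} verbatim, replacing planar angles by angles on $\mathbb{S}^{n-1}$ and the cross product by the component orthogonal to $\nu$. Fix $\xi\in\mathbb{S}^{n-1}$ and set $x=\rho_t(\xi)\xi\in\partial\Omega_t$. The upper bound $\rho_t(\xi)=x\cdot\xi\le h_t(\xi)$ is immediate. Let $\nu$ be an outer unit normal at $x$. The decomposition argument is dimension-free and gives $x=(1-t)R\nu+t\zeta$ with $\zeta\in\partial\Omega_1$ and $\zeta\cdot\nu=h_1(\nu)\ge0$. Let $\beta\in[0,\pi]$ be the angle between $\xi$ and $\nu$. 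Then $|x|\sin\beta=|x-(x\cdot\nu)\nu|=t|\zeta-(\zeta\cdot\nu)\nu|\le tR_1$, and $|x|\ge x\cdot\nu\ge(1-t)R$. Together these yield \eqref{eq:beta} unchanged, $\cos\beta>0$, and $\beta\le\pi R_1t/R$.

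For the lower bound in \eqref{eq:radial-i}, write $\rho_t(\xi)=h_t(\nu)/\cos\beta\ge h_t(\nu)$. We then need $h_t(\nu)\ge h_t(\xi)-tR_1\beta$. Since $h_t=(1-t)R+th_1$ on the sphere, this reduces to $|h_1(\nu)-h_1(\xi)|\le R_1\,d_{\mathbb{S}}(\nu,\xi)=R_1\beta$. That bound holds because the homogeneous extension of $h_1$ is $R_1$-Lipschitz on $\R^n$: it is a maximum of linear functions $z\cdot y$ with $|z|\le R_1$. Hence $|h_1(\nu)-h_1(\xi)|\le R_1|\nu-\xi|\le R_1\beta$, and this replaces the planar estimate $|h'|\le R_1$. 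Combining gives \eqref{eq:radial-i} with the same constant.

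For the gradient bound, $\rho_t$ is Lipschitz on $\mathbb{S}^{n-1}$, because $\Omega_t$ is convex and contains $B_{(1-t)R}$. Hence $\rho_t$ is differentiable a.e. by Rademacher, and at such points $\partial\Omega_t$ has a unique normal $\nu$. The main step is the polar identity $|\nabla_S\rho_t|=\rho_t\tan\beta$. To get it, take a tangent vector $\tau\perp\xi$ and differentiate the curve $s\mapsto\rho_t(\xi_s)\xi_s$ with $\xi_s=\cos s\,\xi+\sin s\,\tau$. Its velocity at $s=0$ is $(\nabla_S\rho_t\cdot\tau)\xi+\rho_t\tau$, which must be orthogonal to $\nu$. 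This gives $\nabla_S\rho_t\cdot\tau=-\rho_t(\nu\cdot\tau)/(\nu\cdot\xi)$ for every $\tau\perp\xi$. Taking the supremum over unit $\tau$ gives $|\nabla_S\rho_t|=\rho_t|\nu-(\nu\cdot\xi)\xi|/\cos\beta=\rho_t\tan\beta$.

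The numerical estimates then carry over unchanged: $\rho_t\le R+tR_1\le\frac98R$ and $\cos\beta\ge0.96$, so $|\nabla_S\rho_t|\le3R_1t$. I expect no real obstacle. The only points needing care are the Lipschitz bound for $h_1$ with respect to geodesic distance, which replaces the one-variable derivative bound, and the a.e.\ uniqueness of the normal. The latter follows because differentiability of $\rho_t$ at $\xi$ forces a unique supporting hyperplane at $x$.
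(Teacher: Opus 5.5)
Your proposal is correct and follows the paper's proof essentially step for step: the tangential component $x-(x\cdot\nu)\nu=t(\zeta-(\zeta\cdot\nu)\nu)$ replaces the cross product, the $R_1$-Lipschitz bound for $h_1$ together with $|\nu-\xi|\le\beta$ replaces $|h'|\le R_1$, and the polar identity $|\nabla_S\rho_t|=\rho_t\tan\beta$ gives the gradient bound with the same constants. The only cosmetic difference is that you derive the polar identity by differentiating along great circles, which also justifies the a.e.\ uniqueness of $\nu$, whereas the paper writes down the explicit normal $\nu=(\rho_t\xi-\nabla_S\rho_t)/(\rho_t^2+|\nabla_S\rho_t|^2)^{1/2}$.
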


\begin{proof}
The proof of Lemma~\ref{lem:radial} applies with three changes. Let
$x=\rho_t(\xi)\xi\in\partial\Omega_t$ with outer unit normal $\nu$ and
decomposition $x=(1-t)R\nu+t\zeta$, $\zeta\in\partial\Omega_1$. First,
the cross product is replaced by the tangential component: since
$x-(x\cdot\nu)\nu=t\bigl(\zeta-(\zeta\cdot\nu)\nu\bigr)$ has norm at most
$tR_1$, the angle $\beta$ between $\xi$ and $\nu$ satisfies
$\sin\beta=|x-(x\cdot\nu)\nu|/|x|\le tR_1/((1-t)R)$, and \eqref{eq:beta}
holds. Second, the bound $h_t(\nu)\ge h_t(\xi)-tR_1\beta$ follows since
$h_1$ is $R_1$-Lipschitz, being the support function of a subset of
$B_{R_1}$, and $|\nu-\xi|=2\sin(\beta/2)\le\beta$. Third, at points of
differentiability of $\rho_t$, the outer normal is
$\nu=(\rho_t\xi-\nabla_S\rho_t)/(\rho_t^2+|\nabla_S\rho_t|^2)^{1/2}$, so
$\cos\beta=\nu\cdot\xi$ gives $|\nabla_S\rho_t|=\rho_t\tan\beta$, and the
constants are as before.
\end{proof}

\begin{proposition}\label{prop:variation-n}
Let $\Omega_1\subset B_{R_1}\subset\R^n$ be a convex body containing the
origin, $R>0$, and
\[
M=\frac{1}{n\kappa_n}\int_{\mathbb{S}^{n-1}}\bigl(h_1-R\bigr)\,d\sigma
=\frac{w(\Omega_1)-w(B_R)}{2},
\qquad
c_{R,n}=\frac{n\kappa_n}{(2\pi)^{n/2}}\,R^{n-1}u_n'(R)^2e^{-R^2/2}.
\]
Then there exist $t_1,C>0$ depending only on $n,R,R_1$ such that
\[
\Tg\bigl((1-t)B_R+t\Omega_1\bigr)\ge\Tg(B_R)+c_{R,n}Mt-Ct^2
\qquad\text{for }0<t\le t_1 .
\]
In particular, if $w(\Omega_1)>2R$, then
$\Tg((1-t)B_R+t\Omega_1)>\Tg(B_R)$ for all sufficiently small $t>0$.
\end{proposition}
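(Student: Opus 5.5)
We follow the proof of Proposition~\ref{prop:variation} verbatim, with polar coordinates $x=r\xi$, $\xi\in\mathbb{S}^{n-1}$. Fix $t\le t_0$ as in Lemma~\ref{lem:radial-n}, let $u=u_n(|x|)-u_n(R)$ and $\rho=\rho_t$, and take the competitor $v_t(r\xi)=u(Rr/\rho(\xi))$ for $0\le r<\rho(\xi)$. The chain rule gives almost everywhere
\[
|\nabla v_t|^2=\frac{R^2}{\rho^2}\Bigl(1+\frac{|\nabla_S\rho|^2}{\rho^2}\Bigr)u'\Bigl(\frac{Rr}{\rho}\Bigr)^2 .
\]
Since $\rho\ge R/2$ is Lipschitz by Lemma~\ref{lem:radial-n}, $v_t\in W^{1,\infty}$ with zero trace, so $v_t\in H^1_0(\Omega_t,\gamma)$. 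We then apply Lemma~\ref{lem:variational}. After the substitution $r=\rho s/R$, the volume element $(2\pi)^{-n/2}e^{-r^2/2}r^{n-1}dr\,d\sigma$ becomes $(\rho/R)^n s^{n-1}e^{-\rho^2s^2/(2R^2)}ds\,d\sigma$. This yields
\[
\Tg(\Omega_t)\ge\frac{1}{(2\pi)^{n/2}}\int_{\mathbb{S}^{n-1}}\Bigl[g_n(\rho)-\frac{|\nabla_S\rho|^2}{\rho^2}A_t\Bigr]d\sigma,
\]
where
\[
g_n(\rho)=\frac{\rho^{n}}{R^{n}}\int_0^R\Bigl[2u-\frac{R^2}{\rho^2}u'^2\Bigr]e^{-\rho^2s^2/(2R^2)}s^{n-1}ds ,
\]
and $0\le A_t\le A$ for a constant $A$ depending on $n$ and $R$. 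By Lemma~\ref{lem:variational}, $(2\pi)^{-n/2}n\kappa_n\,g_n(R)=\Tg(B_R)$.

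The key step is the identity $g_n'(R)=R^{n-1}u'(R)^2e^{-R^2/2}$. Differentiating under the integral at $\rho=R$ gives
\[
R\,g_n'(R)=\int_0^R\bigl[2nu-(n-2)u'^2-s^2(2u-u'^2)\bigr]e^{-s^2/2}s^{n-1}ds .
\]
We compare this with two identities. First, the radial equation $u''=-1-\frac{n-1}{s}u'+su'$ gives
\[
\frac{d}{ds}\bigl[s^nu'^2e^{-s^2/2}\bigr]=\bigl[(2-n)s^{n-1}u'^2-2s^nu'+s^{n+1}u'^2\bigr]e^{-s^2/2}.
\]
Second, integrating by parts with $u(R)=0$ gives
\[
\int_0^R 2s^nu'e^{-s^2/2}ds=-\int_0^R(2n-2s^2)u\,s^{n-1}e^{-s^2/2}ds .
\]
Combining the two, the integrand matches and we obtain $R\,g_n'(R)=R^nu'(R)^2e^{-R^2/2}$. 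This bookkeeping with the $n$-dependent factors, chiefly the $(n-2)u'^2$ term and the Jacobian power $\rho^n$, is where we expect the main care to be needed. For $n=2$ it reduces to the computation in Proposition~\ref{prop:variation}, which serves as a check.

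With the identity in hand, the estimates follow Proposition~\ref{prop:variation}. By smoothness of $g_n$ near $R$,
\[
g_n(\rho)\ge g_n(R)+g_n'(R)(\rho-R)-C_2(\rho-R)^2 .
\]
By \eqref{eq:radial-i} from Lemma~\ref{lem:radial-n}, $\rho_t-R\ge t(h_1-R)-\pi R_1^2t^2/R$. The inclusions $(1-t)B_R\subset\Omega_t\subset B_{R+tR_1}$ give $(\rho_t-R)^2\le(R+R_1)^2t^2$. The gradient bound $|\nabla_S\rho_t|\le 3R_1t$ together with $\rho_t\ge R/2$ controls the last term by $36R_1^2At^2/R^2$. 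Integrating over $\mathbb{S}^{n-1}$ and multiplying by $(2\pi)^{-n/2}$, the linear term becomes
\[
(2\pi)^{-n/2}g_n'(R)\,t\int_{\mathbb{S}^{n-1}}(h_1-R)\,d\sigma=c_{R,n}Mt,
\]
by the definition of $M$, and all remaining terms are $O(t^2)$ with constants depending only on $n$, $R$ and $R_1$. Finally, $u_n'(R)<0$ gives $c_{R,n}>0$, so if $w(\Omega_1)>2R$ then $M>0$ and the last claim follows for small $t$.
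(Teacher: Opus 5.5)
Your proposal is correct and follows the paper's proof step for step. You use the same radially rescaled competitor $v_t(r\xi)=u(Rr/\rho_t(\xi))$ and the same function $g_n=G$. You obtain the identity $G'(R)=R^{n-1}u'(R)^2e^{-R^2/2}$ in the same way, by differentiating $s^nu'^2e^{-s^2/2}$ and integrating by parts once, and you control the $O(t^2)$ errors with Lemma~\ref{lem:radial-n} exactly as the paper does. The only cosmetic difference is that you absorb the factor $(\rho_t/R)^{n-2}$ into $A_t$, whereas the paper keeps it explicit and bounds it by $(3/2)^{n-2}$.
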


For $n=2$, $c_{R,2}=c_R$ and $M$ agrees with
Proposition~\ref{prop:variation} by Cauchy's formula.

\begin{proof}
It suffices to prove the claim for some $t_1 \leq t_0$, with $t_0$ as in
Lemma~\ref{lem:radial-n}. With $u=u_n-u_n(R)$ and the trial function
$v_t(r\xi)=u(Rr/\rho_t(\xi))$, $\xi\in \mathbb{S}^{n-1}$, the computation of
$|\nabla v_t|^2$ is unchanged, and the substitution $r=\rho s/R$ now
carries the factor $r^{n-1}\,dr=(\rho/R)^ns^{n-1}\,ds$:
\[
\Tg(\Omega_t)\ge\int_{\Omega_t}\bigl(2v_t-|\nabla v_t|^2\bigr)\,d\gamma
=\frac{1}{(2\pi)^{n/2}}\int_{\mathbb{S}^{n-1}}
\Bigl[G\bigl(\rho_t(\xi)\bigr)
-\frac{|\nabla_S\rho_t|^2}{\rho_t^2}
\Bigl(\frac{\rho_t}{R}\Bigr)^{n-2}A_t(\xi)\Bigr]\,d\sigma(\xi),
\]
where
\[
G(\rho)=\int_0^R\Bigl[\frac{2\rho^n}{R^n}u(s)
-\frac{\rho^{n-2}}{R^{n-2}}u'(s)^2\Bigr]
e^{-\rho^2s^2/(2R^2)}s^{n-1}\,ds,
\]
\[
A_t(\xi)=\int_0^R u'(s)^2
e^{-\rho_t(\xi)^2s^2/(2R^2)}s^{n-1}\,ds,
\qquad
0\le A_t(\xi)\le A:=\int_0^Ru'(s)^2s^{n-1}\,ds ,
\]
and $(\rho_t/R)^{n-2}\le(3/2)^{n-2}$ for $|\rho_t-R|\le R/2$. Then
$(2\pi)^{-n/2}n\kappa_n\,G(R)=\Tg(B_R)$, and we claim that
$G'(R)=R^{n-1}u'(R)^2e^{-R^2/2}$, so that
$(2\pi)^{-n/2}n\kappa_n\,G'(R)=c_{R,n}$. Differentiating under the
integral,
\[
G'(R)=\frac1R\int_0^R\bigl[2nu-(n-2)u'^2-s^2(2u-u'^2)\bigr]
e^{-s^2/2}s^{n-1}\,ds .
\]
From \eqref{eq:radialODE}, $u''=-1-(n-1)u'/s+su'$, hence
\[
\frac{d}{ds}\bigl[s^nu'(s)^2e^{-s^2/2}\bigr]
=\bigl[(2-n)s^{n-1}u'^2+s^{n+1}u'^2-2s^nu'\bigr]e^{-s^2/2},
\]
and integrating from $0$ to $R$ (the boundary term at $0$ vanishes since
$u'(s)\sim-s/n$),
\[
R^nu'(R)^2e^{-R^2/2}
=\int_0^R\bigl[(2-n)s^{n-1}u'^2+s^{n+1}u'^2-2s^nu'\bigr]e^{-s^2/2}\,ds .
\]
Integrating by parts with $u(R)=0$,
$\int_0^R2s^nu'e^{-s^2/2}ds=-\int_0^R(2n-2s^2)u\,s^{n-1}e^{-s^2/2}ds$.
Combining the two identities gives
$R^nu'(R)^2e^{-R^2/2}=RG'(R)$, which is the claim. The estimates of
Lemma~\ref{lem:radial-n} now give, as in the proof of
Proposition~\ref{prop:variation},
\[
G\bigl(\rho_t(\xi)\bigr)\ge G(R)+G'(R)\,t\,\bigl(h_1(\xi)-R\bigr)-C_2t^2,
\]
where $C_2$ now absorbs both the Taylor remainder of $G$ and the $t^2$
error in \eqref{eq:radial-i}, and
with $A$ replaced by $(3/2)^{n-2}A$ in the bound for the angular term,
and averaging over $\mathbb{S}^{n-1}$ yields
\begin{align*}
\int_{\Omega_t}\bigl(2v_t-|\nabla v_t|^2\bigr)\,d\gamma
&\ge\frac{n\kappa_n}{(2\pi)^{n/2}}\,G(R)
+\frac{G'(R)\,t}{(2\pi)^{n/2}}\int_{\mathbb{S}^{n-1}}\bigl(h_1-R\bigr)\,d\sigma
-Ct^2\\
&=\Tg(B_R)+c_{R,n}Mt-Ct^2.\qedhere
\end{align*}
\end{proof}

The explicit series of Lemma~\ref{lem:ballclosed} is replaced by the
following lower bound.

\begin{lemma}\label{lem:ball-lower}
For $B_R\subset\R^n$,
$\Tg(B_R)\ge\dfrac1{2n}\displaystyle\int_{B_R}\bigl(R^2-|x|^2\bigr)\,d\gamma$.
\end{lemma}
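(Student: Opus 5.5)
We plan to use the variational characterization of Lemma~\ref{lem:variational} with an explicit trial function. On $B_R$ we take the function
\[
v(x)=\frac{R^2-|x|^2}{2n}\cdot\lambda ,
\]
where $\lambda>0$ is a constant to be optimized. Since $v$ is Lipschitz on $\overline{B_R}$ and vanishes on $\partial B_R$, it belongs to $H^1_0(B_R,\gamma)$. Lemma~\ref{lem:variational} then gives
\[
\Tg(B_R)\ge\int_{B_R}\bigl(2v-|\nabla v|^2\bigr)\,d\gamma .
\]

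The main point is to compare $\int|\nabla v|^2\,d\gamma$ with $\int v\,d\gamma$. For this we use the integration-by-parts identity $\int|\nabla v|^2\,d\gamma=\int v(-\Lou v)\,d\gamma$, which is valid because $v=0$ on $\partial B_R$. With $\phi=(R^2-|x|^2)/(2n)$ we compute $\Lou\phi=-1+|x|^2/n$, so $-\Lou\phi=1-|x|^2/n\le 1$. Since $\phi\ge0$, this yields
\[
\int|\nabla\phi|^2\,d\gamma\le\int\phi\,d\gamma .
\]
Consequently, for every $\lambda>0$,
\[
\int\bigl(2\lambda\phi-\lambda^2|\nabla\phi|^2\bigr)\,d\gamma\ge(2\lambda-\lambda^2)\int\phi\,d\gamma .
\]
Choosing $\lambda=1$ gives $\Tg(B_R)\ge\int_{B_R}\phi\,d\gamma=\frac1{2n}\int_{B_R}(R^2-|x|^2)\,d\gamma$, which is exactly the claim.

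The only step that needs care is the sign of $-\Lou\phi$. It must be nonpositive-free, i.e.\ we only need $-\Lou\phi\le1$, and this holds on all of $B_R$ because the correction term $|x|^2/n$ is nonnegative. As a cross-check, one can avoid integration by parts altogether. Write $\Tg(B_R)-\int(2\phi-|\nabla\phi|^2)\,d\gamma=\int|\nabla(u-\phi)|^2\,d\gamma\ge0$, which follows from the proof of Lemma~\ref{lem:variational}. Then compute $\int|\nabla\phi|^2\,d\gamma=\int|x|^2/n^2\,d\gamma$ directly and compare it with $\int\phi\,d\gamma$, again via the same divergence identity $e^{-|x|^2/2}\Lou\phi=\operatorname{div}(e^{-|x|^2/2}\nabla\phi)$ from Section~\ref{sec:regularity}.

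I do not expect a real obstacle here; the whole proof is one trial function plus one integration by parts.
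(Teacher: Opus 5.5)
Your proof is correct, but it takes a different route from the paper. The paper uses the same computation, $-\Lou q=1-|x|^2/n\le1$ for $q=(R^2-|x|^2)/(2n)$, and feeds it into the weak maximum principle: $q$ is a subsolution with the same boundary values as $u_{B_R}$, so $u_{B_R}\ge q$ pointwise, and one integrates.

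You instead use $q$ as a trial function in Lemma~\ref{lem:variational}. You control its energy by $\int|\nabla q|^2\,d\gamma=\int q(-\Lou q)\,d\gamma\le\int q\,d\gamma$. That inequality uses only $q\ge0$; the sign of $-\Lou q$ is irrelevant, and it is in fact negative for $|x|>\sqrt n$ when $R>\sqrt n$. The sentence about being ``nonpositive-free'' is garbled, but your argument handles this correctly. The parameter $\lambda$ is superfluous, since $\lambda=1$ is what you use.

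What each route buys: the paper's version gives the stronger pointwise comparison $u_{B_R}\ge q$. It is also the same barrier mechanism as Lemma~\ref{lem:needle}. Yours needs only the variational principle and one integration by parts of an explicit smooth function, with no maximum principle. Combined with the proof of Lemma~\ref{lem:variational}, it gives the exact identity
\[
\Tg(B_R)=\int_{B_R}q\,d\gamma+\frac1n\int_{B_R}|x|^2q\,d\gamma+\int_{B_R}|\nabla(u_{B_R}-q)|^2\,d\gamma ,
\]
so you get the inequality together with an explicit nonnegative remainder.
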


\begin{proof}
The classical torsion function $q(x)=(R^2-|x|^2)/(2n)$ satisfies
$-\Lou q=1-|x|^2/n\le1$ in $B_R$ and $q=0$ on $\partial B_R$, so
$u_{B_R}\ge q$ by the maximum principle. We finish the proof by integrating.
\end{proof}

\begin{proof}[Proof of Theorem \ref{thm:main-n}]
The support function of
$\Omega_1=N=([-L,L]\times\{0\}^{n-1})+B_\varepsilon$ is
$h_N(\xi)=L|\xi_1|+\varepsilon$. The divergence theorem applied to
the constant field $(1,0,\dots,0)$ on the half-ball
$\{x\in B_1:x_1\ge0\}$
gives $\int_{\mathbb{S}^{n-1}}(\xi_1)^+\,d\sigma=\kappa_{n-1}$, hence
$\int_{\mathbb{S}^{n-1}}|\xi_1|\,d\sigma=2\kappa_{n-1}$ and, with
$L=n\kappa_n/(2\kappa_{n-1})$ and $R=1$,
\[
M=\frac{1}{n\kappa_n}\int_{\mathbb{S}^{n-1}}\bigl(h_N-1\bigr)\,d\sigma
=\frac{2L\kappa_{n-1}}{n\kappa_n}+\varepsilon-1=\varepsilon>0 .
\]
By Proposition~\ref{prop:variation-n}, there is $t_1>0$ with
$\Tg(\Omega_t)>\Tg(B_1)$ for $0<t<t_1$, where
$\Omega_t=(1-t)\Omega_0+t\Omega_1$. Since $N\subset\{|x_2|<\varepsilon\}$
and $\varepsilon\le\frac12$, Lemma~\ref{lem:needle} and the hypothesis on
$\varepsilon$ give
\[
\Tg(N)\le\frac{\varepsilon^2}{2(1-\varepsilon^2)}
\le\frac{2\varepsilon^2}{3}
<\frac1{2n}\int_{B_1}\bigl(1-|x|^2\bigr)\,d\gamma\le\Tg(B_1),
\]
using Lemma~\ref{lem:ball-lower} in the last step. Hence
$\Tg(\Omega_t)>\Tg(B_1)=\max(\Tg(\Omega_0),\Tg(\Omega_1))$ for
$0<t<t_1$, and the non-convexity of $t\mapsto\Tg(\Omega_t)^\alpha$
follows as in the proof of Theorem~\ref{thm:main-BM}. The smooth version
follows by the approximation argument there: $w(N_\delta)\to w(N)$, and
since the hypothesis on $\varepsilon$ is strict, $\delta>0$ can be chosen
so small that $\varepsilon+\delta\le\frac12$,
$(\varepsilon+\delta)^2<\frac{3}{4n}\int_{B_1}(1-|x|^2)\,d\gamma$, and
$N_\delta\subset\{|x_2|<\varepsilon+\delta\}$.

For $n=3$, $L=n\kappa_n/(2\kappa_{n-1})=2$, and $\varepsilon=\frac1{10}$
is admissible: since $1-|x|^2\ge0$ and $e^{-|x|^2/2}\ge e^{-1/2}$ on
$B_1$,
\[
\frac{3}{4n}\int_{B_1}\bigl(1-|x|^2\bigr)\,d\gamma
\ge\frac14\sqrt{\frac{2}{\pi}}\,e^{-1/2}\int_0^1(1-s^2)s^2\,ds
=\frac{1}{30}\sqrt{\frac{2}{\pi}}\,e^{-1/2}>\frac1{100},
\]
because $9\pi e<200$.
\end{proof}

The first variation at the ball suggests that the ball converts perimeter
into torsion most efficiently. In the classical setting, the corresponding
statements are Saint-Venant's inequality for the torsional rigidity at fixed area \cite{Pol48}, \cite{Mak66} and,
for the first Steklov eigenvalue, Weinstock's inequality
$\sigma_1\Per\le2\pi$ for simply connected plane domains, with equality
only for the disk \cite{Wei54}, \cite[\S7.3]{Hen06}. For $\Tg$, no
symmetrization argument is available, since Gaussian symmetrization moves
mass towards half-spaces rather than balls and the operator $\Lou$ is not
scale invariant. We do not know whether
\[
\Tg(\Omega)\le\Tg(B_R)\qquad\text{whenever }\Per(\Omega)=\Per(B_R)
\]
holds for centrally symmetric convex plane bodies $\Omega$, nor whether
its mean-width analogue, $\Tg(\Omega)\le\Tg(B_R)$ whenever $w(\Omega)=2R$,
holds for centrally symmetric convex bodies $\Omega\subset\R^n$.
Propositions~\ref{prop:variation} and~\ref{prop:variation-n} show that
$\Tg$ increases to first order at the ball whenever the perimeter,
respectively the mean width, does.

\section{Dimension one}\label{sec:oneD}

For $n=1$, Question~(Q) of \cite{MSS26} asks whether
$t\mapsto\Tg((1-t)\Omega_0+t\Omega_1)^{1/3}$ is convex for open, bounded,
centrally symmetric $\Omega_0,\Omega_1\subset\R$. A connected such set is
a symmetric interval $B_R=(-R,R)$. For such sets, the question is already settled in \cite[Theorem 1.2, Corollary 1.3]{MSS26}.
Theorem~\ref{thm:oneD} (i), (ii) below records the one-dimensional case from \cite{MSS26} and (iii) adds
that the logarithm is neither convex nor concave. Furthermore in (iv), no non-zero
exponent yields concavity. What Question~(Q) leaves open for $n=1$ is the class of centrally
symmetric sets that are not connected, and there convexity fails for every positive exponent, see Proposition~\ref{prop:disconnected}. Proposition~\ref{prop:noncentered} shows, in addition, that the central-symmetry hypothesis
in Question~(Q) cannot be dropped: convexity fails for every positive exponent already for a pair of reflected, off-center intervals.

Throughout this section, set
\[
\Phi(R)=\int_0^R e^{-s^2/2}\,ds,\qquad
T(R)=\Tg(B_R).
\]

\begin{lemma}\label{lem:closed-oneD}
For $R>0$,
\[
T(R)=\sqrt{\tfrac2\pi}\,J(R),\qquad
J(R)=\int_0^R e^{y^2/2}\Phi(y)^2\,dy .
\]
\end{lemma}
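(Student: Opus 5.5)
We argue exactly as in the proof of Lemma~\ref{lem:ballclosed}, now with $n=1$. The torsion function of $B_R=(-R,R)$ is $u(x)=u_1(|x|)-u_1(R)$, where $u_1$ solves \eqref{eq:radialODE} with $n=1$, that is, $u''-xu'=-1$ with $u'(0)=0$. Multiplying by the integrating factor $e^{-x^2/2}$ turns the equation into $(e^{-x^2/2}u')'=-e^{-x^2/2}$. Integrating from $0$ and using $u'(0)=0$ gives
\[
u'(x)=-e^{x^2/2}\Phi(x)\quad\text{for } x\ge0,
\]
and $u'$ is odd.

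By Lemma~\ref{lem:variational} and its proof, $\Tg(B_R)=\int_{B_R}|\nabla u|^2\,d\gamma$. With $d\gamma=(2\pi)^{-1/2}e^{-x^2/2}dx$ and evenness of the integrand, this becomes
\[
T(R)=\frac{2}{\sqrt{2\pi}}\int_0^R e^{x^2}\Phi(x)^2\,e^{-x^2/2}\,dx .
\]
Since $2/\sqrt{2\pi}=\sqrt{2/\pi}$, this equals $\sqrt{2/\pi}\int_0^R e^{x^2/2}\Phi(x)^2\,dx=\sqrt{2/\pi}\,J(R)$.

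The only points requiring care are minor. First, the identity $\int u\,d\gamma=\int |u'|^2\,d\gamma$ needs the weak form with test function $u$, which is available here because $u$ is smooth up to the boundary of the interval. Second, the constant of integration must be fixed correctly through the symmetry condition $u'(0)=0$.

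As a cross-check, one can also compute $\int u\,d\gamma$ directly. Write $u(x)=\int_x^R e^{y^2/2}\Phi(y)\,dy$ for $x\ge0$, then apply Fubini:
\[
\int_0^R e^{-x^2/2}\int_x^R e^{y^2/2}\Phi(y)\,dy\,dx=\int_0^R e^{y^2/2}\Phi(y)\,\Phi(y)\,dy,
\]
which recovers the same formula. This route avoids the energy identity altogether, so it can serve as the primary argument if preferred.
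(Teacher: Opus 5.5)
Your proof is correct, and your ``cross-check'' via Fubini is exactly the paper's proof, while your primary energy-identity route ($T(R)=\int_{B_R}|u'|^2\,d\gamma$ with $u'(x)=-e^{x^2/2}\Phi(x)$) is the argument the paper uses for $n=2$ in Lemma~\ref{lem:ballclosed}. The two computations are interchangeable here: both are the same integration by parts, since $e^{-x^2/2}u'(x)^2=-u'(x)\Phi(x)=e^{x^2/2}\Phi(x)^2$ for $x\ge0$.
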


\begin{proof}
For $n=1$, \eqref{eq:radialODE} reads
$(e^{-r^2/2}u_1')'=-e^{-r^2/2}$, so $u_1'(r)=-e^{r^2/2}\Phi(r)$, and the
torsion function of $B_R$ is
\[
u(x)=u_1(|x|)-u_1(R)=\int_{|x|}^R e^{y^2/2}\Phi(y)\,dy .
\]
Since $u$ is even, Fubini's theorem gives
\[
T(R)=2\int_0^R u(x)\,\frac{e^{-x^2/2}}{\sqrt{2\pi}}\,dx
=\sqrt{\tfrac2\pi}\int_0^R e^{y^2/2}\Phi(y)
\Bigl(\int_0^y e^{-x^2/2}\,dx\Bigr)dy
=\sqrt{\tfrac2\pi}\,J(R). \qedhere
\]
\end{proof}

\begin{theorem}\label{thm:oneD}
For $R_0,R_1>0$ and $t\in[0,1]$ let $R_t=(1-t)R_0+tR_1$, so that
$(1-t)B_{R_0}+tB_{R_1}=B_{R_t}\subset\R$.
\begin{enumerate}
\item[(i)] For every $\alpha\ge\frac13$, the function
$t\mapsto T(R_t)^{\alpha}$ is convex on $[0,1]$ for all $R_0,R_1>0$, and
strictly convex when $R_0\neq R_1$.
\item[(ii)] For every $\alpha\in(0,\frac13)$ there exist $R_0\neq R_1$
for which $t\mapsto T(R_t)^{\alpha}$ is not convex.
\item[(iii)] The function $t\mapsto\log T(R_t)$ is neither convex for
all pairs $R_0,R_1$ nor concave for all pairs.
\item[(iv)] For every $\alpha\in\R\setminus\{0\}$ there exist
$R_0\neq R_1$ for which $t\mapsto T(R_t)^{\alpha}$ is not concave.
\end{enumerate}
\end{theorem}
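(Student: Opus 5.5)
Since $R_t$ is affine in $t$, convexity of $t\mapsto T(R_t)^\alpha$ for all pairs $R_0,R_1$ is equivalent to convexity of $R\mapsto T(R)^\alpha$ on $(0,\infty)$, and likewise for concavity and for $\log T$. So the whole statement reduces to sign properties of second derivatives of $F_\alpha=T^\alpha$ and of $\log T$ as functions of $R$. By Lemma~\ref{lem:closed-oneD} it suffices to work with $J$: constants do not affect these properties. We have
\[
J'(R)=e^{R^2/2}\Phi(R)^2,\qquad J''(R)=e^{R^2/2}\bigl(R\Phi^2+2e^{-R^2/2}\Phi\bigr).
\]
A computation shows that $(J^\alpha)''\ge0$ if and only if
\[
Q_\alpha(R):=(\alpha-1)J'(R)^2+J(R)J''(R)\ge0 .
\]
Likewise $\log J$ is convex at $R$ if and only if $JJ''-J'^2\ge0$, i.e.\ $Q_0\ge0$.

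Parts (i) and (ii) are \cite[Theorem 1.2, Corollary 1.3]{MSS26}. I would simply cite them. To make this self-contained, I would note that the asymptotics as $R\to0$ decide optimality. There $\Phi\sim R$, so $J\sim R^3/3$, $J'\sim R^2$ and $J''\sim 2R$. Hence $Q_\alpha\sim(\alpha-1+\tfrac23)R^4=(\alpha-\tfrac13)R^4$, which is negative for $\alpha<\tfrac13$. This gives (ii). For (i) one needs $Q_{1/3}\ge0$ for all $R$, and that is the cited result.

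For (iii), the small-$R$ expansion gives $Q_0\sim-\tfrac13R^4<0$. So $\log T$ is strictly concave near $0$, and taking $R_0,R_1$ small shows that $\log T(R_t)$ is not convex. For non-concavity I would use large $R$. There $\Phi\to\sqrt{\pi/2}$ and $J'\sim ce^{R^2/2}$ with $c=\pi/2$. Laplace-type asymptotics give
\[
J\sim c\,e^{R^2/2}\bigl(R^{-1}+R^{-3}+\dots\bigr),\qquad J''\sim c\,e^{R^2/2}\bigl(R+O(e^{-R^2/2})\bigr).
\]
Then
\[
JJ''-J'^2\sim c^2e^{R^2}\bigl[(R^{-1}+R^{-3})R-1\bigr]=c^2e^{R^2}R^{-2}>0 .
\]
So $\log T$ is strictly convex for large $R$, and a pair of large $R_0,R_1$ shows it is not concave. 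The main obstacle is making this expansion rigorous, since the leading terms cancel. I would try to avoid asymptotics by writing
\[
JJ''-J'^2=J'\Bigl(J\,\frac{J''}{J'}-J'\Bigr)
\]
and proving $J\cdot\bigl(R+2e^{-R^2/2}/\Phi\bigr)>J'$ for large $R$. Integrating by parts, $J=\int e^{y^2/2}\Phi^2\,dy\ge\int\bigl(1-y^{-2}\bigr)\dots$ gives the explicit lower bound $J\ge e^{R^2/2}\Phi(R)^2/R-C$. Together with the positive extra term $2e^{-R^2/2}/\Phi$ and a second-order term from integration by parts, this should yield the inequality.

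For (iv), I would use the same two regimes. If $\alpha<1$, then $Q_\alpha$ at large $R$ is $(\alpha-1)J'^2+JJ''\approx c^2e^{R^2}\bigl[\alpha+R^{-2}\bigr]$. This is positive for $\alpha>0$, so $T^\alpha$ is convex there. If $\alpha\ge1$, then $T^\alpha$ is a composition of a convex increasing function with the convex function $T$ at large $R$, since $J''>0$; it is strictly convex there, hence not concave. For $\alpha<0$, concavity of $T^\alpha$ is equivalent to $Q_\alpha\ge0$ (the sign flips because $\alpha<0$). But at small $R$ we have $Q_\alpha\sim(\alpha-\tfrac13)R^4<0$. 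So $T^\alpha$ is strictly convex near $0$ and not concave. In every case, choosing $R_0\neq R_1$ in an interval where strict convexity holds finishes the proof.
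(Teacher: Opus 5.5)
Your proposal is correct. It differs from the paper in how it shows that $\log T$ is not concave, and hence in case $\alpha>0$ of (iv).

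\textbf{Where you agree with the paper.} Both of you cite \cite{MSS26} for (i). Your small-$R$ expansion $Q_\alpha=(\alpha-\frac13)R^4+O(R^6)$ is the paper's computation $JJ''/(J')^2\to\frac23$. It gives non-convexity of $\log T$ near $0$, and non-concavity of $T^\alpha$ for $\alpha<0$, exactly as in the paper. It also makes (ii) self-contained, where the paper only cites.

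\textbf{Where you differ.} The paper never looks at $(\log J)''$ for large $R$. It uses only the crude bounds $J(S)\le\frac\pi2e^{S^2/2}$ and, for $S\ge2$, $J(S)\ge e^{-1}e^{(S-1)^2/2}$. It then tests midpoint concavity on the far-apart pair $R_0=1$, $R_1=L$. There the exponent $(-L^2+6L-1)/4\to-\infty$ swamps all constants, so no cancellation has to be controlled. Case $\alpha>0$ of (iv) then follows by composing a concave $T^\alpha$ with the increasing concave logarithm.

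You instead prove that $\log J$ is eventually strictly convex. This is a stronger, local statement: every pair of sufficiently large radii violates concavity. Moreover, since $Q_\alpha=Q_0+\alpha J'^2$, it gives strict convexity of $T^\alpha$ on a half-line for every $\alpha>0$ at once. So your split into $\alpha<1$ and $\alpha\ge1$ is unnecessary.

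\textbf{The step you left as ``should yield''.} The price of your route is the cancellation you identified, and your sketch leans on the wrong terms. A bound $J\ge J'/R-C$ gives only $RJ-J'\ge-CR$. The extra contribution $2e^{-R^2/2}J/\Phi$ is $O(1/R)$. Neither closes the gap.

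What closes it is the exact identity obtained by integrating by parts with $e^{y^2/2}\,dy=y^{-1}\,d\bigl(e^{y^2/2}\bigr)$:
\[
J(R)=J(1)-J'(1)+\frac{J'(R)}{R}-2\int_1^R\frac{\Phi(y)}{y}\,dy+\int_1^R\frac{e^{y^2/2}\Phi(y)^2}{y^2}\,dy .
\]
For $R\ge2$ the last integral is at least $\Phi(1)^2e^{(R-1)^2/2}R^{-2}$, while the negative terms are $O(\log R)$. Hence $RJ-J'\to+\infty$. Since $J''=RJ'+2\Phi$, we have $JJ''-J'^2=J'\bigl(RJ-J'\bigr)+2\Phi J$, which is therefore positive for large $R$.
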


\begin{proof}
Parts (i) and (ii) are proved in \cite{MSS26}: Theorem~1.2 gives the
case $\alpha=\frac13$ with equality only for $R_0=R_1$, the case
$\alpha>\frac13$ follows by composing with the increasing, strictly
convex map $x\mapsto x^{3\alpha}$, as observed there, and Corollary~1.3
shows that the exponent cannot be lowered for $n=1$. 

We prove (iii). Since $\Phi^2<\frac\pi2$ and
$\int_0^S e^{y^2/2}\,dy\le e^{S^2/2}$ for all $S>0$
(for \(0<S\le1\),
\(
\int_0^S e^{y^2/2}\,dy
\le Se^{S^2/2}\le e^{S^2/2},
\)
and for \(S\ge1\),
\(
\int_0^S e^{y^2/2}\,dy
=\int_0^1 e^{y^2/2}\,dy+\int_1^S e^{y^2/2}\,dy
\le e^{1/2}+\int_1^S y e^{y^2/2}\,dy
=e^{S^2/2}\)),
 while
$\Phi(1)\ge e^{-1/2}$, we have
\begin{equation}\label{eq:Jbounds}
J(S)\le\frac\pi2\,e^{S^2/2}\quad(S>0),
\qquad
J(S)\ge\int_{S-1}^S e^{y^2/2}\Phi(y)^2\,dy\ge e^{-1}e^{(S-1)^2/2}
\quad(S\ge2).
\end{equation}
Let $R_0=1$, $R_1=L\ge3$ and $t=\frac12$, so $R_{1/2}=\frac{1+L}2\ge2$.
Concavity of $t\mapsto\log T(R_t)$ would give
$J(R_{1/2})^2\ge J(1)J(L)$, while by \eqref{eq:Jbounds}
\[
\frac{J(R_{1/2})^2}{J(1)J(L)}
\le\frac{(\pi/2)^2e}{J(1)}\,e^{(1+L)^2/4-(L-1)^2/2}
=\frac{(\pi/2)^2e}{J(1)}\,e^{(-L^2+6L-1)/4}
\xrightarrow[L\to\infty]{}0 ,
\]
a contradiction for large $L$.
Finally, $J'(R)=e^{R^2/2}\Phi(R)^2$ by definition, so $J''=RJ'+2\Phi$,
and $\Phi(y)=y+O(y^3)$ gives $J=\frac{R^3}3+O(R^5)$, $J'=R^2+O(R^4)$ and
$J''=2R+O(R^3)$, hence
\begin{equation}\label{eq:psi}
\psi:=\frac{TT''}{(T')^2}=\frac{JJ''}{(J')^2}\to\frac23
\qquad\text{as }R\to0^+ .
\end{equation}
Since $(\log T)''=(T'/T)^2(\psi-1)$, $\log T$ is strictly concave near
$0$, hence $t\mapsto\log T(R_t)$ is not convex for pairs of small
intervals.

Part (iv) follows from (iii). If $t\mapsto T(R_t)^{\alpha}$ is concave
for a pair $R_0\neq R_1$, then so is
$\log T(R_t)^{\alpha}=\alpha\log T(R_t)$, because the logarithm is
increasing and concave. For $\alpha>0$, concavity for the pair $R_0=1$,
$R_1=L$ above would thus make $t\mapsto\log T(R_t)$ concave for that
pair, which was excluded for large $L$, while for $\alpha<0$, concavity for a
pair of small intervals would make $t\mapsto\log T(R_t)$ convex,
contradicting the strict concavity of $\log T$ near $0$.
\end{proof}

Connectedness cannot be removed from Theorem~\ref{thm:oneD}: when one
of the sets is a union of two intervals, convexity fails for every
positive exponent. Its mechanism is that Minkowski addition merges the
two components: for $t<\frac\delta2$ the combination below is a single
interval strictly containing $\Omega_0$, while $\Tg(\Omega_1)$ is small
because the components of $\Omega_1$ lie far from the origin.

\begin{proposition}\label{prop:disconnected}
For $0<\delta\le\frac1{10}$, let $\Omega_0=(-\delta,\delta)$ and
\[
\Omega_1
=\bigl(-2-\delta,-2+\delta\bigr)
\cup\bigl(2-\delta,2+\delta\bigr).
\]
Then, with $\Omega_t=(1-t)\Omega_0+t\Omega_1$,
\[
\Tg(\Omega_t)>\max\bigl(\Tg(\Omega_0),\Tg(\Omega_1)\bigr)
\qquad\text{for }0<t<\tfrac\delta2 .
\]
In particular, for every $\alpha>0$ the function
$t\mapsto\Tg(\Omega_t)^{\alpha}$ is not convex on $[0,1]$, and neither
is $t\mapsto\log\Tg(\Omega_t)$, and the convexity of Question~(Q) fails,
for every positive exponent, on the class of open, bounded, centrally
symmetric subsets of $\R$.
\end{proposition}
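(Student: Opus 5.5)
The plan is to compute $\Omega_t$ explicitly, show it is a single symmetric interval strictly containing $\Omega_0$ for $0<t<\frac\delta2$, and compare torsions using monotonicity and the bound of Lemma~\ref{lem:needle} in its one-dimensional form.

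\emph{Step 1: identify $\Omega_t$.} The Minkowski combination of $(1-t)(-\delta,\delta)$ with $t\Omega_1$ is the union of the two intervals $\pm2t+(1-t)\cdot(-\delta,\delta)+t(-\delta,\delta)$, that is, $(\pm2t-\delta,\pm2t+\delta)$. These two intervals overlap exactly when $4t<2\delta$, i.e.\ $t<\frac\delta2$. In that range their union is the single interval $\Omega_t=(-2t-\delta,2t+\delta)=B_{\delta+2t}$, which strictly contains $\Omega_0=B_\delta$.

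\emph{Step 2: $\Tg(\Omega_t)>\Tg(\Omega_0)$.} Domain monotonicity follows from the maximum principle: $u_{B_{\delta+2t}}\ge0$ on $\partial B_\delta$, so $u_{B_{\delta+2t}}\ge u_{B_\delta}$ on $B_\delta$. The inequality is strict because the larger torsion function is positive on $B_{\delta+2t}\setminus B_\delta$. Alternatively, $T(R)=\sqrt{2/\pi}\,J(R)$ from Lemma~\ref{lem:closed-oneD} is strictly increasing in $R$, since $J'>0$.

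\emph{Step 3: $\Tg(\Omega_0)>\Tg(\Omega_1)$.} This is the step I expect to be the main obstacle, because it needs a quantitative comparison. For the lower bound, I will use Lemma~\ref{lem:closed-oneD} together with $\Phi(y)\ge ye^{-y^2/2}$ and $e^{y^2/2}\ge1$. This gives
\[
J(\delta)\ge\int_0^\delta y^2e^{-y^2}\,dy\ge e^{-\delta^2}\frac{\delta^3}{3},
\]
so $T(\delta)\ge\sqrt{2/\pi}\,e^{-1/100}\,\delta^3/3$.

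For the upper bound, on each component $I_\pm=(\pm2-\delta,\pm2+\delta)$ I will compare $u_{\Omega_1}$ with $q(x)=(\delta^2-(x\mp2)^2)/c$. A direct computation gives
\[
-\Lou q=\bigl(2+2x(x\mp2)\bigr)/c.
\]
On $I_\pm$ we have $x(x\mp2)\ge-(2+\delta)\delta\ge-0.21$, so $-\Lou q\ge1.58/c$. Taking $c=1.58$ makes $q$ a supersolution vanishing on $\partial I_\pm$, hence $u_{\Omega_1}\le q\le\delta^2/1.58$. This sup bound alone is too weak against $\delta^3$. I will therefore also use that $\gamma(\Omega_1)\le 2\cdot2\delta\cdot e^{-(1.9)^2/2}/\sqrt{2\pi}$, which yields
\[
\Tg(\Omega_1)\le\frac{\delta^2}{1.58}\cdot\frac{4\delta e^{-1.805}}{\sqrt{2\pi}}.
\]
Comparing constants, this upper bound is roughly $0.166\,\delta^3$, while the lower bound on $T(\delta)$ is roughly $0.264\,\delta^3$. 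If the margin turned out tighter, I would sharpen the estimate by integrating $q$ exactly, $\int q=\frac43\delta^3/c$ per component, against the density bound $e^{-(1.9)^2/2}/\sqrt{2\pi}$.

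\emph{Conclusion.} For $0<t<\frac\delta2$ we then have $\Tg(\Omega_t)>\Tg(\Omega_0)=\max(\Tg(\Omega_0),\Tg(\Omega_1))$. It follows that $\Tg(\Omega_t)^\alpha>(1-t)\Tg(\Omega_0)^\alpha+t\Tg(\Omega_1)^\alpha$ for every $\alpha>0$, exactly as in the proof of Theorem~\ref{thm:main-BM}. The same comparison gives $\log\Tg(\Omega_t)>(1-t)\log\Tg(\Omega_0)+t\log\Tg(\Omega_1)$, so neither function is convex on $[0,1]$.
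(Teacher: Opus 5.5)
Your proposal is correct and follows essentially the paper's proof: $\Omega_t=B_{\delta+2t}$, strict monotonicity via $J'>0$, the lower bound on $\Tg(\Omega_0)$ from $\Phi(y)\ge ye^{-y^2/2}$, and a quadratic barrier with $c=1.58$ (the paper's factor $\frac{50}{79}=1/c$) multiplied by a Gaussian mass bound, which gives the same upper constant $\approx0.166\,\delta^3$. One slip needs fixing: in fact $-\Lou q=\bigl(2-2x(x\mp2)\bigr)/c$, not $\bigl(2+2x(x\mp2)\bigr)/c$, so the bound you need is the upper bound $x(x\mp2)\le(2+\delta)\delta\le0.21$ on $I_\pm$. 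This bound holds because $|x(x\mp2)|\le(2+\delta)\delta$ there, so the constant $1.58/c$ is unchanged.
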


\begin{proof}
We have $\Omega_t=(-2t-\delta, -2t + \delta) \cup (2t-\delta, 2t+\delta)$ because Minkowski addition distributes over unions. For $0<t<\frac\delta2$ they overlap, and
\[
\Omega_t=\bigl(-2t-\delta, 2t+ \delta\bigr)
=B_{\delta+2t} ,
\]
so Lemma~\ref{lem:closed-oneD} gives
\[
\Tg(\Omega_t)-\Tg(\Omega_0)
=\sqrt{\tfrac2\pi}\int_{\delta}^{\delta+2t}e^{y^2/2}\Phi(y)^2\,dy>0 .
\]

It remains to show $\Tg(\Omega_1)<\Tg(\Omega_0)$. Since
$\Phi(y)\ge ye^{-y^2/2}$, we have
$e^{y^2/2}\Phi(y)^2\ge y^2e^{-y^2/2}\ge y^2e^{-1/200}$ on $[0,\delta]$,
so Lemma~\ref{lem:closed-oneD} gives
\[
\Tg(\Omega_0)=\sqrt{\tfrac2\pi}\,J(\delta)
\ge\sqrt{\tfrac2\pi}\,\frac{\delta^3}3\,e^{-1/200} .
\]
The components of $\Omega_1$ are disjoint, so the torsion function of
$\Omega_1$ restricts on each component to the torsion function of that
component, and $\Tg(\Omega_1)=2\,\Tg(I)$ with $I=(2-\delta,2+\delta)$,
by the symmetry of $\gamma$. On $I$, the barrier
$q(x)=\frac{50}{79}\bigl(\delta^2-(x-2)^2\bigr)$ satisfies $q\ge0$ on
$\overline I$ with $q=0$ at the endpoints, and
$x(x-2)\le2\delta+\delta^2\le\frac{21}{100}$, so
\[
-\Lou q=\frac{100}{79}\bigl(1-x(x-2)\bigr)
\ge\frac{100}{79}\cdot\frac{79}{100}=1 ,
\]
hence $u_{\Omega_1}\le q\le\frac{50}{79}\delta^2$ on $I$ by the maximum
principle. Since the density of $\gamma$ is at most
$(2\pi)^{-1/2}e^{-(2-\delta)^2/2}\le(2\pi)^{-1/2}e^{-361/200}$ on $I$,
\[
\Tg(\Omega_1)=2\,\Tg(I)
\le2\cdot\frac{50}{79}\,\delta^2\,\gamma(I)
\le\sqrt{\tfrac2\pi}\,\frac{100}{79}\,\delta^3 e^{-361/200}
<\sqrt{\tfrac2\pi}\,\frac{\delta^3}3\,e^{-1/200}\le\Tg(\Omega_0),
\]
where the strict inequality is $e^{9/5}>\frac{300}{79}$, which holds
since $e^3>16>\bigl(\tfrac{300}{79}\bigr)^2$. Hence
$\Tg(\Omega_t)>\Tg(\Omega_0)>\Tg(\Omega_1)$ for
$0<t<\frac\delta2$, and for every $\alpha>0$,
\[
\Tg(\Omega_t)^{\alpha}
>\max\bigl(\Tg(\Omega_0),\Tg(\Omega_1)\bigr)^{\alpha}
\ge(1-t)\,\Tg(\Omega_0)^{\alpha}+t\,\Tg(\Omega_1)^{\alpha},
\]
and likewise for the logarithm.
\end{proof}

Central symmetry of the individual sets cannot be dropped either, even
for intervals. Example~3.1 of \cite{MSS26} observes this numerically for
$\Omega_0=(-1,1)$, $\Omega_1=(-2,0)$ and $\alpha=1$. The following
proposition gives a rigorous instance that rules out every positive exponent. The idea is that sliding an interval away from the
origin decreases its torsion, so the Minkowski midpoint of an off-center
interval and its reflection has larger torsion than both endpoints.

\begin{proposition}\label{prop:noncentered}
For $0<\delta\le\frac1{10}$, let $\Omega_0=(2-\delta,2+\delta)$ and
$\Omega_1=-\Omega_0$, so that
$\tfrac12\Omega_0+\tfrac12\Omega_1=B_{\delta}$. Then
\[
\Tg(B_{\delta})>\Tg(\Omega_0)=\Tg(\Omega_1),
\]
so for every $\alpha>0$ the function
$t\mapsto\Tg\bigl((1-t)\Omega_0+t\Omega_1\bigr)^{\alpha}$ is not convex
on $[0,1]$, and neither is $t\mapsto\log\Tg\bigl((1-t)\Omega_0+t\Omega_1\bigr)$.
\end{proposition}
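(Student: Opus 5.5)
The argument mirrors the second half of the proof of Proposition~\ref{prop:disconnected}. First I check the Minkowski identity: $(1-t)\Omega_0+t\Omega_1$ is the interval with center $2(1-2t)$ and half-length $\delta$. At $t=\frac12$ this is $B_\delta$, and at $t=0,1$ it is $\Omega_0,\Omega_1$. Symmetry of $\gamma$ under $x\mapsto-x$ maps the torsion function of $\Omega_0$ to that of $\Omega_1$, so $\Tg(\Omega_0)=\Tg(\Omega_1)$.

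The lower bound for $\Tg(B_\delta)$ is already available from the proof of Proposition~\ref{prop:disconnected}. By Lemma~\ref{lem:closed-oneD} and $\Phi(y)\ge ye^{-y^2/2}$,
\[
\Tg(B_\delta)\ge\sqrt{\tfrac2\pi}\,\tfrac{\delta^3}{3}\,e^{-1/200}.
\]

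For the upper bound on $\Tg(\Omega_0)$, I reuse the same barrier $q(x)=\frac{50}{79}\bigl(\delta^2-(x-2)^2\bigr)$ on $I=\Omega_0$. It satisfies
\[
-\Lou q=\tfrac{100}{79}\bigl(1-x(x-2)\bigr)\ge1,
\]
since $x(x-2)\le2\delta+\delta^2\le\frac{21}{100}$. By the maximum principle, $u_{\Omega_0}\le\frac{50}{79}\delta^2$. The Gaussian density on $I$ is at most $(2\pi)^{-1/2}e^{-361/200}$, so
\[
\Tg(\Omega_0)\le\tfrac{50}{79}\delta^2\cdot 2\delta\,(2\pi)^{-1/2}e^{-361/200}=\sqrt{\tfrac2\pi}\,\tfrac{50}{79}\,\delta^3e^{-361/200}.
\]
This is half the bound obtained there for $\Tg(\Omega_1)$. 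It is strictly smaller than the lower bound for $\Tg(B_\delta)$ because $e^{9/5}>\frac{150}{79}$, which is weaker than the inequality already verified in that proof.

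Finally, let $f(t)=\Tg((1-t)\Omega_0+t\Omega_1)$. Then $f(\frac12)^\alpha>f(0)^\alpha=f(1)^\alpha=\frac12 f(0)^\alpha+\frac12 f(1)^\alpha$, which violates midpoint convexity for every $\alpha>0$. Taking logarithms gives the same violation for $\log f$.

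There is no real obstacle. The only step needing care is the numerical constant comparison, and it is a weakening of one already checked.
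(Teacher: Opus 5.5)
Your proof is correct and follows the paper's argument essentially verbatim: reflection symmetry of $\gamma$ gives $\Tg(\Omega_0)=\Tg(\Omega_1)$, the barrier $q$ and the Gaussian density bound from Proposition~\ref{prop:disconnected} give the upper bound on $\Tg(\Omega_0)$, the estimate $\Phi(y)\ge ye^{-y^2/2}$ gives the lower bound on $\Tg(B_\delta)$, and midpoint convexity is violated at $t=\frac12$. You are also right that the constant comparison needed here, $e^{9/5}>\frac{150}{79}$, is weaker than the inequality $e^{9/5}>\frac{300}{79}$ already checked in that proof, because the factor $2$ coming from the two components of $\Omega_1$ there is absent here.
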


\begin{proof}
Since $\gamma$ is invariant under $x\mapsto-x$,
$\Tg(\Omega_1)=\Tg(\Omega_0)$, and the estimates in the proof of
Proposition~\ref{prop:disconnected} give
\[
\Tg(\Omega_0)=\Tg(I)
\le\sqrt{\tfrac2\pi}\,\frac{50}{79}\,\delta^3 e^{-361/200}
<\sqrt{\tfrac2\pi}\,\frac{\delta^3}3\,e^{-1/200}\le\Tg(B_\delta) .
\]
Finally, at $t=\frac12$, for every $\alpha>0$,
\[
\Tg(B_{\delta})^{\alpha}>\Tg(\Omega_0)^{\alpha}
=\tfrac12\,\Tg(\Omega_0)^{\alpha}+\tfrac12\,\Tg(\Omega_1)^{\alpha},
\]
and likewise for the logarithm.
\end{proof}

\section{Acknowledgment}
In our first version of the manuscript, we misquoted Mar\'in Sola and Salerno. We thank them for pointing it out. Their conjecture 1.4 is stated for all dimensions $n$ rather than just $n=2$ and the Question (Q) is for $n \in \{1,2\}$. After their communication, we added the section about dimension one.
\section{Declaration of AI usage}
The counterexamples in this paper were found after a computer-assisted
search with Claude Fable 5.0 that included symbolic calculations, numerical counterexample
searches, proof exploration, and proof review. Further verification was performed with ChatGPT Sol 5.6. AI output was not treated as mathematical authority. The authors checked, edited, and remain responsible for all theorem statements, proofs, citations and final prose.

\end{document}